\documentclass[twoside]{amsart}
\usepackage[dvips]{graphicx}   %To insert ps figures
\usepackage{color,epsfig}      %To insert ps figures 

\usepackage{amsmath,amssymb,enumerate}
\usepackage[margin=2.5cm,nohead]{geometry}
\usepackage[utf8]{inputenc}
\usepackage[english]{babel}
\usepackage{latexsym}
\usepackage{amsfonts}
\usepackage{amssymb}
\usepackage{amsthm}
\usepackage{setspace}
\usepackage{graphicx}
\usepackage{epstopdf}
\usepackage{caption}
\usepackage{lmodern}
\usepackage{indentfirst}
\usepackage{geometry}
\usepackage{faktor}
\usepackage{diagrams}

\newtheorem{thm}{Theorem}[section]

\newtheorem{lemma}[thm]{Lemma}

\newtheorem{cor}[thm]{Corollary}
\newtheorem{prop}[thm]{Proposition}
\newtheorem{defn}[thm]{Definition}
\newtheorem{rem}[thm]{Remark}

\newcommand{\rect}{\mathop{\mathrm{Rect}^{\circ}}}
\newcommand{\Rect}{\mathop{\mathrm{Rect}}}
\newcommand{\pent}{\mathop{\mathrm{Pent}^{\circ}}}
\newcommand{\Pent}{\mathop{\mathrm{Pent}}}
\newcommand{\hex}{\mathop{\mathrm{Hex}^{\circ}}}
\newcommand{\Hex}{\mathop{\mathrm{Hex}}}
\newcommand{\Ker}{\mathop{\mathrm{Ker}}}
\renewcommand{\Im}{\mathop{\mathrm{Im}}}
\newcommand{\XX}{\mathbb{X}}
\newcommand{\OO}{\mathbb{O}}
\newcommand{\NN}{\mathbb{N}}
\newcommand{\ZZ}{\mathbb{Z}}
\newcommand{\RR}{\mathbb{R}}

\newcommand{\gr}{\mathop{\mathrm{gr}_t}}
\newcommand{\gs}[1]{\mathbf{#1}}
\newcommand{\Int}{\mathrm{Int}}
\newcommand{\SG}{\mathbf{S}(\mathbb{G})}
\newcommand{\cR}{\mathcal{R}}

\newcommand{\GG}{\mathbb{G}}
\newcommand{\Xbar}{\bar{\mathbb{X}}}
\newcommand{\Obar}{\bar{\mathbb{O}}}
\newcommand{\Tors}{\mathop{\mathrm{Tors}}}
\newcommand{\Cone}{\mathop{\mathrm{Cone}}}
\newcommand{\shift}[1]{\left[\!\left[#1\right]\!\right]}

\newarrow{Corresponds}<--->

\frenchspacing
\headsep=25pt

%%% ----------------------------------------------------------------------
%\textheight=600pt
%\textwidth=375pt
\parskip=4pt
\begin{document}

\title[] {The Knot Invariant $\Upsilon$ Using Grid Homologies}

\author[Vikt\'oria F\"oldv\'ari]{Vikt\'oria F\"oldv\'ari}

\address{Vikt\'oria F\"oldv\'ari,
 Institute of Mathematics, E\"otv\"os Lor\'and University, Budapest, Hungary
}
\email{foldvari@math.elte.hu}

%\date{}
                                                                              
\begin{abstract}
According to the idea of Ozsváth, Stipsicz and Szabó, we define the knot invariant $\Upsilon$ without the holomorphic theory, using constructions from grid homology. We develop a homology theory using grid diagrams, and show that $\Upsilon$, as introduced this way, is a well-defined knot invariant. We reprove some important propositions using the new techniques, and show that $\Upsilon$ provides a lower bound on the unknotting number.
\end{abstract}

\maketitle
                                                                                
%\section{Introduction}
%Introduction

\section{Introduction}\label{inadvance}

In 2014, Ozsváth, Stipsicz and Szabó \cite{ozsvath:2014} introduced the knot invariant $\Upsilon$ using knot Floer homology. In their book \cite{ozsvath2015grid} they used grid homologies to develop the concordance invariant $\tau$ first defined in \cite{MR2026543}, and among other open problems they presumed that $\Upsilon$ can also be defined without the holomorphic theory. This idea is reasonable and important: for example the fact that $\upsilon(K)=\Upsilon_K(1)$ gives a lower bound on the unoriented four-ball genus \cite{unoriented}, have only been proved using methods based on grid diagrams. Our aim is to work out this new approach: to develop a homology theory using grid diagrams of knots, and to introduce the knot invariant $\Upsilon$ using grid homologies. The significance of this new definition is that it gives the opportunity to examine problems combinatorically. We reprove some important properties using the new techniques.

Former versions of grid diagrams have been studied for more than a century as convenient tools for understanding knots and links \cite{brunn1897, Cromwell199537, MR2232855, MR555835, MR792695}. Since every oriented link can be represented by a grid diagram (see \cite[Chapter 3.]{ozsvath2015grid} for the basic definitions), this theory has the advantage of handling knots and links with discrete methods. In the second half of the twentieth century, knot theory was connected with more and more different areas of mathematics, which led to outstanding results. 

In recent years, knot theory has undergone significant progress, mostly because of the incorporation of algebraic concepts, such as knot Floer homology, Heegaard Floer homology, Khovanov homology and the generalizations of knot polynomials. Grid diagrams have also been reconsidered \cite{MR2480614, MR2500576, MR2915478} to build up grid homology theories which enabled us to approach problems algebraically.

The main points of this paper are the new introduction of $\Upsilon$ in Definition \ref{def:Upsilon}, and Theorem \ref{upsiloninvariantundergridmove} stating that $\Upsilon$, as defined using grids, is a well-defined knot invariant. In Corollary \ref{cor:unknottingbound} we give a lower bound on the unknotting number. Note that the results were known earlier \cite{ozsvath:2014} but their proof was based on the holomorphic theory.

Before proceeding further, we state here some properties of $\Upsilon_K: [0,2]\rightarrow \mathbb{R}$ that are proved in \cite{ozsvath:2014}.\\

\begin{itemize}
\item $\Upsilon_K$ is additive under connected sum of knots:
$$\Upsilon_{K_1 \# K_2}(t)=\Upsilon_{K_1}(t)+\Upsilon_{K_2}(t).$$
\item For the mirror image $m(K)$ of the knot $K$, $\Upsilon_{m(K)}(t)=-\Upsilon_K(t)$.
\item $\Upsilon_K(t)=\Upsilon_K(2-t).$
\item Suppose that $K_1$ and $K_2$ are two knots that can be connected by a genus $g$ cobordism in $[0,1]\times S^3$. Then for $t\in [0,1]$
$$|\Upsilon_{K_1}(t)-\Upsilon_{K_2}(t)|\leq t\cdot g.$$
\item For a knot $K$ and $t\in [0,1]$ the invariant $\Upsilon_K(t)$ bounds the slice genus $g_s(K)$ of $K$:
$$|\Upsilon_K(t)|\leq t\cdot g_s(K).$$
\item For an alternating knot $K$, the $\Upsilon_K$ invariant can be explicitly computed:
$$\Upsilon_K(t)=(1-|t-1|)\cdot \frac{\sigma}{2},$$
where $\sigma$ is the signature of the knot $K$.
\item $\Upsilon_K(0)=0$ and $\Upsilon_K(2)=0$.
\item For $m, n \in \NN_{> 0}$, the quantity $\Upsilon_K(\frac{m}{n})$ lies in $\frac{1}{n} \ZZ$.
\item The function $\Upsilon_K: t\mapsto \Upsilon_K(t)$ is continuous and piecewise linear. Each slope is an integer.
\item The slope of $\Upsilon_K(t)$ at $t=0$ is given by $-\tau(K)$, where $\tau$ is a knot invariant, defined in Chapter 6 of \cite{ozsvath2015grid}.
\item If the knots $K_1$ and $K_2$ can be connected with a concordance, then $\Upsilon_{K_1}(t)=\Upsilon_{K_2}(t)$ for $t\in[0,2]$. Furthermore, $[K]\mapsto \Upsilon_K$ is a group homomorphism from the concordance group to the space $C=C^0[0,2]$ of continuous functions on $[0,2]$.

\end{itemize}

For torus knots, the Alexander polynomial \cite{MR1501429} determines $\Upsilon_K(t)$, see \cite{ozsvath:2014}.
%ToDo: t hol van?
%ToDo: becslés sorszáma

$\mathbf{Acknowledgement:}$ I am grateful to my supervisor, András Stipsicz, for his help and corrections.

%\section{Grid diagrams}\label{gridchapter}
\section{Background}\label{gridchapter}

\subsection{Planar and toric grid diagrams}

A \emph{planar grid diagram} $\GG$ with \emph{grid number} $n$ is a square on the plane with $n$ rows and $n$ columns of small squares (i.e. an $n\times n$ grid), marked with $X$'s and $O$'s in a way that no square contains both $X$ and $O$, and each row and each column contains exactly one $X$ and one $O$.

We use the notation $\XX$ for the set of squares marked with an $X$, and $\OO$ for the ones containing an $O$. Sometimes we label the markings: $\{O_i\}_{i=1}^n$, $\{X_i\}_{i=1}^n$.

We usually work with a \emph{toroidal grid diagram}, which can be obtained by identifying the opposite sides of a planar grid diagram: its top boundary segment with its bottom one and its left boundary segment with its right one. A \emph{planar realization} of a toroidal grid diagram is a planar grid diagram obtained by cutting up the toroidal grid diagram along a horizontal and a vertical line.

Every grid diagram $\GG$ determines a diagram of an oriented link in the following way:
In each row connect the $O$-marking to the $X$-marking, and in each column connect the $X$-marking to the $O$-marking with an oriented line segment, such that the vertical segments always pass over the horizontal ones. This way we get closed curves: the diagram of the oriented link $\vec{L}$ given by the grid diagram. We call $\GG$ a \emph{grid diagram for} $\vec{L}$.

The converse is also true: Every oriented link in $S^3$ can be represented by a grid diagram. (See \cite[Chapter 3.]{ozsvath2015grid} for the easy proof.)

A \emph{cyclic permutation} of a planar grid diagram $\GG$ is a process in which we permute the rows or the columns of $\GG$ cyclically. Note that a cyclic permutation has no effect on the induced toroidal grid diagram, and that two different planar realizations of a toroidal grid diagram can always be connected by a sequence of cyclic permutations. Also $\vec{L}$, represented by the grid diagram stays the same after some cyclic permutations.

\subsection{Grid moves}

In 1995 Cromwell \cite{Cromwell199537} introduced a list of alterations of grid diagrams that are similar to Reidemeister moves for knot diagrams, and do not change the knot type.

First we consider the two main types of moves on planar grid diagrams, then we extend these to toroidal grid diagrams. For a detailed introduction of grid moves see \cite[Chapter 3.]{ozsvath2015grid}.

In a grid diagram $\GG$ every column determines a closed interval of real numbers that connects the height of its $O$-marking with the height of its $X$-marking. Consider a pair of consecutive columns in $\GG$, and suppose that the two intervals associated to them are either disjoint, or one is contained in the other (Figure \ref{fig:columncommutation}). We say that the diagram $\GG'$ differs from $\GG$ by a \emph{column commutation}, if it can be obtained by interchanging two consecutive columns of $\GG$ that satisfy the above condition. A \emph{row commutation} is defined analogously, using rows in place of columns. Column or row commutations collectively are called \emph{commutation moves}.

%If the two intervals associated to the consecutive columns or rows share a common endpoint, interchanging them is called a \emph{switch}.

If we consider two consecutive columns in the grid diagram $\GG$, such that the interior of their corresponding intervals are not disjoint, but neither is contained by the other, and interchange these columns to get grid diagram $\GG'$, then $\GG$ and $\GG'$ are related by a \emph{cross-commutation}. We use the same name for the procedure interpreted on rows in place of columns. If $\vec{L}$ and $\vec{L}'$ are two links whose grid diagrams $\GG$ and $\GG'$ differ by a cross-commutation, then $\vec{L}$ and $\vec{L}'$ are related by a \emph{crossing change}.

While commutation moves preserve the grid number, the second type of grid moves will change it:

Let $\GG$ be an $n\times n$ grid diagram. We say that the $(n+1)\times(n+1)$ grid diagram $\GG'$ is the \emph{stabilization} of $\GG$ if it can be obtained from $\GG$ in the following way: Choose a marked square in $\GG$, and erase the marking in it, in the other marked square in its row and in the other marked square in its column. Then split the row and the column of the chosen marking in $\GG$ into two, that is, add a new horizontal and a new vertical line to get an $(n+1)\times(n+1)$ grid, see Figure \ref{fig:columncommutation}. Note that there are four ways to insert markings in the two new rows and columns to have a grid diagram, $\GG'$ can be any of these four.

The following lemma can be found as Lemma 3.2.2. in \cite{ozsvath2015grid}.

\begin{lemma}
Any stabilization can be expressed as a stabilization of the type shown in Figure \ref{fig:columncommutation} followed by a sequence of commutations.
\end{lemma}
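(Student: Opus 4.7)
The plan is to enumerate the four stabilization types at a fixed marked square and to realise each of them as the canonical stabilization of Figure~\ref{fig:columncommutation} followed by a short sequence of commutation moves performed in the immediate vicinity of the stabilization site.

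I would begin by fixing the chosen marked square $A$ of $\GG$ at which stabilization is performed. Splitting the row and the column of $A$ creates a pair of new rows $r_1, r_2$ and a pair of new columns $c_1, c_2$, bounding a $2\times 2$ block. The four admissible stabilizations at $A$ correspond bijectively to the four choices of which corner of this block is left unmarked, the $X$ and $O$ labels of the three occupied corners being forced by the induced link. Label the canonical type of Figure~\ref{fig:columncommutation} by $T_0$ and the three remaining types by $T_1, T_2, T_3$.

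The key step is the observation that in $T_0$ one of the new rows, say $r_1$, has both its $X$ and $O$ markings in adjacent squares of the $2\times 2$ block, so its associated interval has unit length; the analogous statement holds for one of the new columns. A unit-length interval is automatically disjoint from, or nested inside, the longer interval of its companion row or column, so the commutation swapping $r_1$ with $r_2$ is legal, and likewise for the pair of columns. A direct inspection of the resulting marking patterns then shows that each of $T_1, T_2, T_3$ is obtained from $T_0$ by at most one such row commutation together with at most one such column commutation.

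The main obstacle is the case-by-case verification that the interval condition from Section~\ref{gridchapter} indeed holds for each swap used, and that the resulting configuration coincides with the intended non-canonical type on the nose. Both checks are purely combinatorial and localised to the $2\times 2$ block together with the single additional marking in each of the rows and columns involved; once they are carried out, the lemma follows.
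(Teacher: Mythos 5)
The paper does not prove this lemma itself but cites Lemma~3.2.2 of Ozsv\'ath--Stipsicz--Szab\'o \cite{ozsvath2015grid}, whose proof covers stabilizations at both $X$- and $O$-markings. Your argument correctly handles the case where the chosen marked square $A$ carries an $X$: the four types $X{:}NW$, $X{:}NE$, $X{:}SW$, $X{:}SE$ at $A$ are related to the canonical $X{:}SW$ by swapping the two new rows, the two new columns, or both, and these swaps satisfy the interval condition for a commutation because one of the new rows (resp.\ columns) has its $X$ and $O$ in adjacent cells of the $2\times 2$ block, leaving no column (resp.\ row) strictly between them.

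The gap is that the definition of stabilization given just before the lemma also allows $A$ to carry an $O$. In that case the $2\times 2$ block contains two $O$'s and one $X$ (the types $O{:}NW$, $O{:}NE$, $O{:}SW$, $O{:}SE$), so the canonical type of Figure~\ref{fig:columncommutation}, which has two $X$'s and one $O$, is simply not among ``the four admissible stabilizations at $A$,'' and your step ``Label the canonical type of Figure~\ref{fig:columncommutation} by $T_0$'' has nothing to refer to. Local swaps of the two new rows and columns can only permute the four $O$-types among themselves; they never change the $X$/$O$ content of the $2\times 2$ block. Converting an $O$-stabilization into an $X{:}SW$ stabilization requires relocating the stabilization site to an $X$-marking sharing a row or column with the original $O$ and then commuting the intervening rows or columns---an additional, non-local argument that your proof omits.
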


\begin{figure}[h]
\centering
\includegraphics[width=0.8\textwidth]{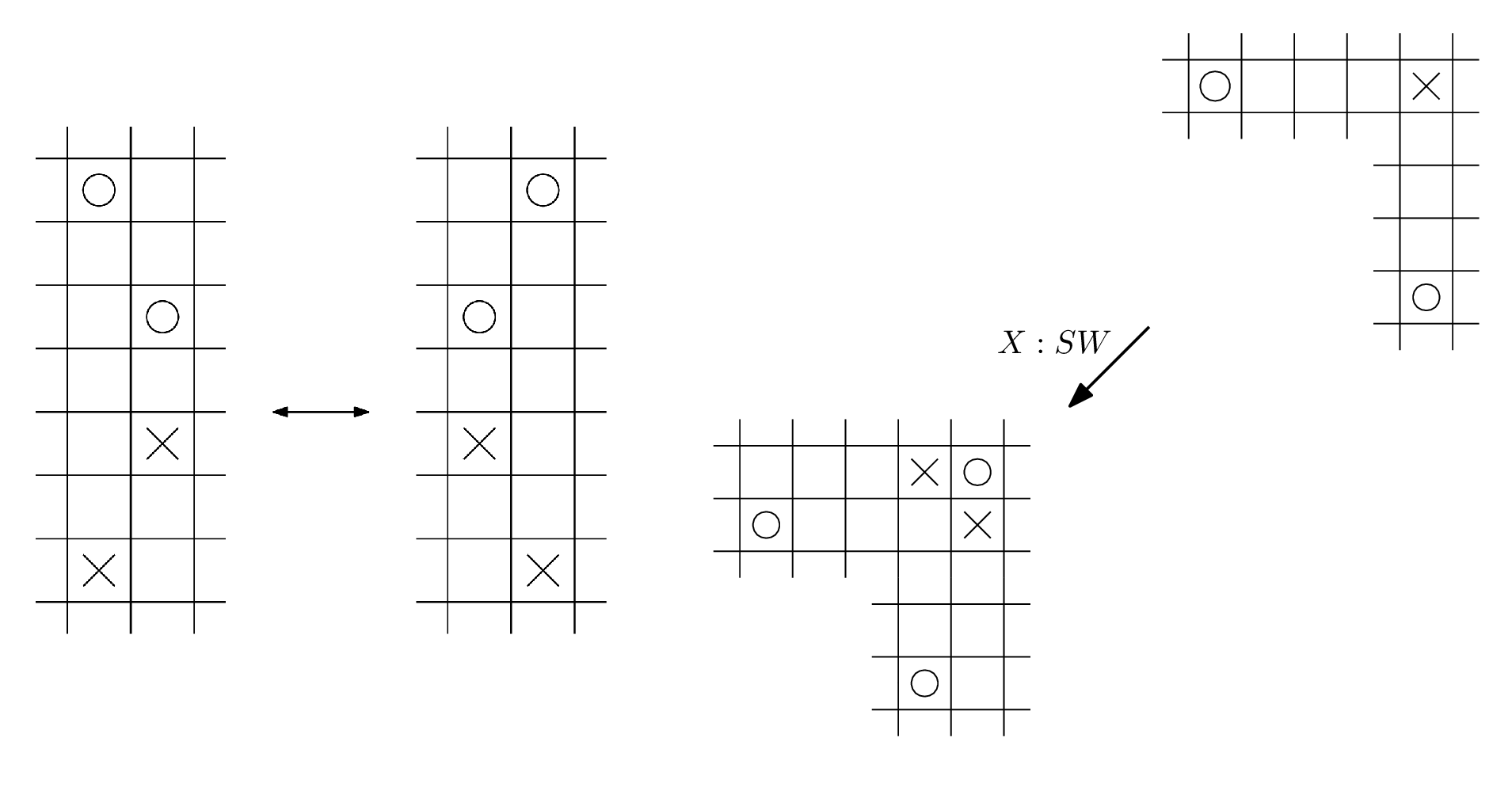}
\caption{A commutation (left) and a stabilization (right)}
\label{fig:columncommutation}
\end{figure}

The inverse of a stabilization is a \emph{destabilization}.
Commutation, stabilization and destabilization together are called \emph{grid moves} or Cromwell moves.

The following theorem of Cromwell \cite{Cromwell199537} shows us that grid moves and grid diagrams are really effective tools for creating knot invariants:
\begin{thm}
Two toroidal grid diagrams represent equivalent links if and only if there is a finite sequence of grid moves that transform one into the other.
\end{thm}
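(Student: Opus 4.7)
The proof splits into the two implications, of very different difficulty.

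For the easy direction (grid moves preserve link type), the plan is to check each type of move geometrically on the associated link diagram. A commutation move on two consecutive columns whose associated height intervals are disjoint or nested can be realized by an ambient isotopy of the projection that slides one vertical strand past the other without introducing or removing crossings — the disjoint/nested hypothesis is precisely what guarantees no new crossings appear. A stabilization of the type drawn in Figure \ref{fig:columncommutation}, together with the lemma just cited, reduces to showing that the basic stabilization realizes a planar isotopy combined with a Reidemeister~I-like removal of a single kink; in grid-diagram language one checks that the new $2\times 2$ block formed after inserting the extra row and column traces out a small arc isotopic to a corner of the old link projection. Cyclic permutation of rows or columns, which is what distinguishes different planar realizations of a fixed toroidal diagram, is likewise a planar isotopy after reglueing, so the move makes sense on the torus.

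For the hard direction (equivalent links are connected by grid moves), the plan is to reduce to the classical Reidemeister theorem for link projections in the plane. First I would show that every generic oriented link diagram $D$ can be approximated by a rectilinear diagram — one whose strands are unions of horizontal and vertical segments with vertical always over horizontal — and then by a diagram in grid form (an arc presentation), at the cost of adding auxiliary vertices. This promotion step is standard but fiddly: one perturbs each crossing and each arc into horizontal/vertical position, possibly inserting extra corners, and each such modification can be implemented in the grid world by a sequence of stabilizations and commutations. Next, given two grid diagrams $\GG$ and $\GG'$ for the same oriented link $\vec L$, classical Reidemeister theory provides a finite sequence of planar isotopies and Reidemeister moves $R_1, R_2, R_3$ connecting their associated link diagrams. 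The core of the proof is to realize each Reidemeister move within the grid category by an explicit finite recipe of commutations and (de)stabilizations; this is usually done by a case analysis on the local picture around the move, which is the main obstacle.

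The main obstacle, as in every known proof of this theorem, is exactly this case analysis for Reidemeister~II and especially Reidemeister~III: one must exhibit, for each local configuration of horizontal/vertical segments and of $X$/$O$ markings in the nearby rows and columns, a concrete sequence of grid moves — often requiring temporary stabilizations to create room for commutations that are otherwise blocked by overlapping height intervals — and then a symmetric destabilization to return to the correct grid number. One must also verify that the choice of planar realization of a toroidal diagram does not matter, which follows because any two planar realizations of a toroidal diagram are connected by cyclic row/column permutations, and each cyclic permutation is expressible as a commutation move after a stabilization/destabilization pair pushes a marking across the boundary.

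Finally I would assemble the two directions: the forward implication gives a well-defined map from sequences of grid moves to link isotopies, and the reverse implication shows this map is surjective on isotopy classes, completing Cromwell's equivalence. I would treat the detailed Reidemeister-to-grid translation as the one step needing genuine combinatorial work and refer the remaining routine verifications to \cite{ozsvath2015grid, Cromwell199537}.
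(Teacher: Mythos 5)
The paper does not prove this theorem at all: it is Cromwell's theorem, stated and attributed to \cite{Cromwell199537} (and discussed further in \cite{ozsvath2015grid}), with no proof supplied. So there is no in-paper argument to compare your proposal against.

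Your outline is nonetheless a fair description of the standard route. The easy direction is handled correctly: commutations with disjoint or nested height intervals are planar isotopies, and a stabilization either introduces only a new corner on an arc (hence a planar isotopy) or a single kink removable by Reidemeister~I, depending on the type; one small caveat is that you should make this type-dependence explicit rather than lumping both phenomena together. For the hard direction, the reduction to Reidemeister's theorem — first put an arbitrary diagram in rectilinear/grid position, then realize each Reidemeister move by a finite recipe of commutations and (de)stabilizations — is exactly how Cromwell and subsequent expositions proceed, and you are right that the genuine work is the case analysis for $R_2$ and $R_3$, where temporary stabilizations are needed to clear obstructions to commutation. Your remark about planar realizations is slightly misstated: cyclic permutation of rows or columns is literally the identity on the toroidal diagram, so it requires no justification at the level of the theorem as stated; what does require proof (and is a separate lemma in \cite{ozsvath2015grid}) is that a cyclic permutation of a \emph{planar} diagram can be written as a sequence of commutations and (de)stabilizations, which is used when one wants to work entirely with planar moves. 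As it stands your text is an outline that defers the central combinatorial verification to the references, which is appropriate given that the paper itself treats the theorem as a black box.
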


Grid moves can be transmitted naturally for the case of toroidal grid diagrams: two toroidal grid diagrams differ by a commutation move/a stabilization, if they have planar realizations that differ by a commutation move/a stabilization.

\subsection{Grid states and gradings}

We would like to define a version of grid homology, so first we need a chain complex. We recall here the concept of grid states, then, for the boundary map, we take rectangles connecting grid states. For a detailed introduction see \cite[Chapter 4.]{ozsvath2015grid}.

A \emph{grid state} $\gs x$ for a grid diagram $\GG$ with grid number $n$ consists of $n$ points in the torus such that each horizontal and each vertical circle contains exactly one element of $\gs x$. The set of grid states for $\GG$ is denoted by $\gs S(\GG)$.

Consider two grid states $\gs x, \gs y \in \gs S(\GG)$ that overlap in $n-2$ points in the torus. The four points left out are the corners of an embedded \emph{rectangle} $r$, that inherits an orientation from the torus. The boundary of $r$ is the union of oriented arcs, two of which are on the horizontal and two on the vertical circles. We say that the rectangle $r$ \emph{goes from} $\gs x$ \emph{to} $\gs y$ if the horizontal segments in $\partial r$ point from the components of $\gs x$ to the components of $\gs y$, while the vertical segments point from the components of $\gs y$ to the components of $\gs x$.

For $\gs x, \gs y \in \gs S(\GG)$, we use the notation $\Rect(\gs x, \gs y)$ for the set of rectangles going from $\gs x$ to $\gs y$.

We call a rectangle $r\in \Rect(\gs x, \gs y)$ an \emph{empty rectangle} if 
$$\gs x\cap \Int(r)=\gs y \cap \Int(r)=\emptyset.$$
The set of empty rectangles from $\gs x$ to $\gs y$ is denoted by $\rect(\gs x, \gs y)$.

We will need a generalization of the rectangle idea:
For $\gs x, \gs y\in \gs S(\GG)$ a (positive) \emph{domain} $\psi$ from $\gs x$ to $\gs y$ is a formal linear combination of the small squares of $\GG$, where the coefficients are non-negative integers, and the horizontal boundary segments of $\psi$ point from the components of $\gs x$ to the components of $\gs y$. The set of domains from $\gs x$ to $\gs y$ is denoted by $\pi(\gs x, \gs y)$.

To introduce grid homology, we equip grid states with two gradings, called the Maslov ($M_\OO$) and the Alexander grading ($A$). For the definitions and properties see \cite[Section 4.3.]{ozsvath2015grid}. Here we only state two important facts:

\begin{prop}
 Let $\GG$ be a toroidal grid diagram for a knot, and $\gs x, \gs y \in \gs S(\GG)$ two grid states that can be connected by some rectangle $r\in \Rect(\gs x, \gs y)$. Then,
\begin{equation}\label{Maslovrectangle}
M_\OO(\gs x)-M_\OO(\gs y)=1-2\cdot|r\cap \OO|+2\cdot |\gs x \cap \Int(r)|, \quad\mathrm{and}
\end{equation}
 \begin{equation}\label{Alexanderrectangle}
A(\gs x)-A(\gs y)=|r\cap \XX|-|r\cap \OO|.
\end{equation}
\end{prop}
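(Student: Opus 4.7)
My plan is to reduce both formulas to the combinatorial definitions of $M_\OO$ and $A$ given in \cite[Section 4.3.]{ozsvath2015grid}, which express the gradings in terms of a symmetric bilinear function $\mathcal{J}(P,Q)$ on formal sums of points in the plane, built from counts of ordered pairs with one strictly southwest of the other. Since $\gs x$ and $\gs y$ agree outside the four corners of $r$, the differences of the gradings should be expressible purely in terms of how those four corners interact with $\OO$, $\XX$, and with the common part $\gs z := \gs x \cap \gs y$.

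As a first step I would fix a planar realization of the toroidal diagram so that the rectangle $r$ lifts to an honest axis-aligned rectangle in $\RR^2$. Writing $\gs x = \gs z \sqcup \{p_1,p_2\}$ and $\gs y = \gs z \sqcup \{q_1,q_2\}$, where the $p_i$ and $q_i$ form the two diagonal pairs of corners of $r$, the difference $M_\OO(\gs x)-M_\OO(\gs y)$ expands via bilinearity of $\mathcal{J}$, and all terms that involve only $\gs z$ and $\OO$ cancel immediately.

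The surviving terms split into three contributions: (i) the self-pairing of the four corners of $r$ against themselves, a short direct case-check yielding exactly $+1$; (ii) the pairing of the corners against $\OO$, where each $O$-marking lying in $\Int(r)$ contributes $-2$ while markings outside or on the boundary contribute $0$; and (iii) the pairing of the corners against $\gs z$, where each shared grid point in $\Int(r)$ contributes $+2$, these being precisely the points of $\gs x \cap \Int(r)$. Summing the three contributions yields formula \eqref{Maslovrectangle}. For the Alexander formula \eqref{Alexanderrectangle} I would invoke the identity $A(\gs x)=\tfrac{1}{2}\bigl(M_\OO(\gs x)-M_\XX(\gs x)\bigr)-\tfrac{n-1}{2}$ from the same section: the normalization constant and the terms $1$ and $2|\gs x \cap \Int(r)|$ all cancel when subtracting, leaving $A(\gs x)-A(\gs y)=|r\cap \XX|-|r\cap \OO|$ directly from \eqref{Maslovrectangle} applied with both $\OO$ and $\XX$.

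The main obstacle is the careful bookkeeping in contribution (ii): one has to check that $O$-markings on the boundary or at the corners of $r$ contribute $0$ (the contributions from the two corners on either side cancel), that the four corners of $r$ pair with themselves under $\mathcal{J}$ to produce exactly $+1$ regardless of the relative position of the two $\gs x$-corners with respect to the two $\gs y$-corners, and that passing from the torus to the planar realization does not affect the answer, since the whole computation is local to $r$. Once these three local calculations are confirmed, both equations follow by elementary linear algebra on the bilinear form $\mathcal{J}$.
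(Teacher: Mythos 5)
The paper does not prove this proposition itself; it states it as background and refers the reader to \cite[Section 4.3]{ozsvath2015grid} for the definitions and details. Your proposal reproduces precisely the argument from that reference: express $M_\OO$ and $A$ via the bilinear form $\mathcal{J}$, expand $M_\OO(\gs x)-M_\OO(\gs y)$ by bilinearity using $\gs x = \gs z \sqcup \{p_1,p_2\}$ and $\gs y = \gs z \sqcup \{q_1,q_2\}$, check the three local contributions, and deduce the Alexander formula from $A = \tfrac12(M_\OO - M_\XX) - \tfrac{n-1}{2}$. This is correct and is essentially the same approach; the only small inefficiency is that you worry about $O$-markings lying on $\partial r$ or at corners of $r$, which cannot happen since $O$-markings sit at centers of squares while $\partial r$ lies on grid lines.
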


%\subsection{The grid complex}
\section{The $t$-modified grid complex}

First of all, we recall some basic notions that will play an important role afterwards.

A set $S\subset \RR$ is called \emph{well-ordered}, if every subset of $S$ has a \mbox{minimal element.}
We will use the notation 
$$\mathcal{R}'=\{\sum\limits_{\alpha \in A} U^\alpha | A\subset \mathbb{R}_{\geq 0}, \ A\ \mathrm{ is\ well-ordered}\ \} $$
for the \emph{long power series ring over $\mathbb{F}_2$}. (We denote by $\mathbb{F}_2$ the field with two elements.)

$\mathcal{R}'$ is indeed a ring with the following operations:
For $A$, $B\subset \mathbb{R}_{\geq 0}$ well-ordered sets let $a=\sum\limits_{\alpha\in A}U^\alpha$ and $b=\sum\limits_{\beta\in B}U^\beta$. Define the sum on $\mathcal{R}'$ in a way that $a+b=\sum\limits_{\gamma\in C}U^\gamma$, where $C=A\cup B\setminus A\cap B$ is the symmetric difference of the index sets.

Notice that $C$ is well-ordered, since the union, the intersection and the difference of well-ordered sets are also well-ordered.

The multiplication on $\mathcal{R}'$ is defined as the Cauchy-product of the elements, that is,
$$\left(\sum\limits_{\alpha\in A}U^\alpha\right)\cdot \left( \sum\limits_{\beta\in B}U^\beta\right)=\sum\limits_{\gamma\in A+B} \left(\sum\limits_{\substack{\alpha\in A, \beta\in B \\ \alpha+\beta=\gamma}}U^\gamma\right).$$
Here $A+B$ is the sumset of $A$ and $B$. This definition makes sense, because every $\gamma$ can be written as the sum of some $\alpha$ and $\beta$ in finitely many ways: if we suppose the opposite, then there exists an infinite, strictly decreasing sequence of either the $\alpha$'s or the $\beta$'s, which contradicts the fact that $A$ and $B$ are well-ordered sets.

Let $\cR_t\leq \cR'$ be the subring generated by the elements $U^t$ and $U^{2-t}$.

Now we are ready to define the chain complex which will give us the desired homologies.

\begin{defn}
For each $t\in [0,2]$ we define the \emph{$t$-modified grid complex} $tGC^-(\mathbb{G})$ as the free module over $\mathcal{R}$ generated by $\mathbf{S}(\mathbb{G})$, equipped with the $\mathcal{R}$-module endomorphism $\partial^-_t$, whose value on any $\gs x\in \mathbf{S}(\mathbb{G})$ is given by
\begin{equation}\label{eq:deltax}
\partial^-_t (\gs x) =\sum\limits_{\gs y\in \mathbf{S}(\mathbb{G})} \left(\sum\limits_{r\in \rect (\gs x,\gs y)} U^{t\cdot |\XX \cap r|+(2-t)\cdot |\OO\cap r|}\right) \cdot \gs y.
\end{equation}
\end{defn}

%ToDo: Proof vastagon szedve

\begin{thm}\label{deltadelta0}
The operator $\partial^-_t :\ tGC^-(\mathbb{G})\rightarrow tGC^-(\mathbb{G})$ satisfies $\partial^-_t \circ \partial^-_t=0$.
\end{thm}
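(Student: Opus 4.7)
The plan is to follow the standard cancellation argument for $\partial^2=0$ in grid homology, as in \cite[Section 4.6]{ozsvath2015grid}, adapted to the $t$-modified weights. First I would expand
\[
\partial_t^- \circ \partial_t^-(\gs x) = \sum_{\gs z \in \SG}\,\sum_{\gs y \in \SG}\,\sum_{\substack{r_1 \in \rect(\gs x, \gs y)\\ r_2 \in \rect(\gs y, \gs z)}} U^{t(|\XX\cap r_1| + |\XX\cap r_2|) + (2-t)(|\OO\cap r_1| + |\OO\cap r_2|)}\, \gs z,
\]
and observe that the exponent depends only on the 2-chain $\psi = r_1 + r_2 \in \pi(\gs x, \gs z)$, since $|\XX\cap r_1|+|\XX\cap r_2|=|\XX\cap \psi|$ counted with multiplicity (and similarly for $\OO$). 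Regrouping by $\psi$ reduces the statement to: for every $\psi$ admitting at least one decomposition $\psi=r_1*r_2$ into a concatenation of two empty rectangles, the number $N(\psi)$ of such decompositions is even.

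Next I would classify $\psi$ geometrically by the overlap pattern of the two rectangles: (i)~$r_1$ and $r_2$ have disjoint interiors (including the nested configuration); (ii)~$r_1$ and $r_2$ share exactly one corner, so that $\psi$ is supported on an L-shaped or hexagonal region; (iii)~$r_1$ and $r_2$ share two corners, in which case $\psi$ is a horizontal or vertical annulus and $\gs z=\gs x$. In cases (i) and (ii) I would exhibit an involution on the set of decompositions --- swapping the order in (i), and ``re-cutting'' the union along the opposite diagonal of the shared corner in (ii) --- yielding $N(\psi)=2$. For case (iii), say a horizontal annulus in row~$i$, the $\gs x$-corners of any rectangle filling a part of the annulus are rigidly forced to be the two $\gs x$-points lying on the bounding horizontal circles $y=i$ and $y=i+1$; this fixes the two vertical cuts, leaving precisely the two labellings of the short/long sub-arcs as $r_1$ and $r_2$. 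In all three cases $N(\psi)\equiv 0\pmod 2$, so every contribution vanishes over $\mathbb{F}_2$ and the theorem follows.

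The main obstacle compared with the classical proof is case (iii). In the usual setting one counts only $\XX$-disjoint rectangles, and since every horizontal or vertical annulus contains an $X$-marking this case is vacuous. The $t$-modified boundary, however, counts \emph{all} empty rectangles, so annular contributions genuinely appear and must be shown to cancel by a direct combinatorial argument. Cases (i) and (ii) reuse the standard involutions essentially verbatim; the classification of corner-sharing patterns is fiddly but routine, and the uniform weight $U^{t|\XX\cap\psi|+(2-t)|\OO\cap\psi|}$ ensures that paired decompositions cancel regardless of whether they contain $X$-markings.
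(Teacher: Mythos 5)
Your overall strategy (regroup by $\psi \in \pi(\gs x, \gs z)$, reduce to showing the total contribution of each $\psi$ is even, classify by the corner-sharing pattern of $r_1$ and $r_2$) is exactly the paper's, and cases (i) and (ii) are handled correctly. You also correctly identify where the argument must differ from the $X$-blocked $GC^-$ case: thin annuli genuinely appear, since $\partial_t^-$ does not exclude rectangles meeting $\XX$.

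However, your case (iii) has a real gap. For a thin annulus $\psi \in \pi(\gs x, \gs x)$ one has $N(\psi) = 1$, not $2$. The two $\gs x$-points on the bounding circles do determine two complementary sub-rectangles $R_1, R_2$, but the orientation condition built into the definition of $\rect(\gs x, \gs y)$ — horizontal boundary arcs point \emph{from} $\gs x$-components \emph{to} $\gs y$-components, vertical arcs the other way — forces the $\gs x$-corners of a rectangle in $\rect(\gs x,\cdot)$ to sit at specific (opposite) corners. Exactly one of $R_1, R_2$ has the $\gs x$-points in that position; the other has them at the complementary corners and therefore lies in $\rect(\cdot,\gs x)$ instead. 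So only one of the orderings $R_1 * R_2$, $R_2 * R_1$ is an admissible juxtaposition starting at $\gs x$, and the "swap the labelling of short/long" involution you invoke does not exist. Cancellation for case (iii) therefore cannot be achieved per-$\psi$. What the paper does instead is observe that \emph{every} thin annulus contributes: there are $2n$ of them ($n$ horizontal, $n$ vertical), each with $N(\psi)=1$, and each carries the identical weight $U^{t\cdot 1 + (2-t)\cdot 1} = U^2$ because a thin annulus contains exactly one $X$-marking and one $O$-marking. The total contribution in case (iii) is thus $2n\, U^2 \cdot \gs x = 0$ over $\mathbb{F}_2$. You need this global count over all annuli; the local parity argument that works in cases (i) and (ii) fails here.
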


\begin{proof}[\textbf{\emph{Proof}}]
We follow the proof of Lemma 4.6.7 in \cite{ozsvath2015grid}.
Consider grid states $\gs x$ and $\gs z$. For a fixed $\psi \in \pi(\gs x,\gs z)$ denote by $N(\psi)$ the number of ways we can decompose $\psi$
as a juxtaposition of two empty rectangles $r_1 * r_2$. Notice that if $\psi=r_1*r_2$ for some
$r_1\in \rect(\gs x,\gs y)$ and $r_2\in \rect(\gs y,\gs z)$, then
$$|\XX \cap \psi|=|\XX\cap r_1|+|\XX\cap r_2| \quad \mathrm{and}\quad |\OO \cap \psi|=|\OO\cap r_1|+|\OO\cap r_2|.$$
It follows that the endomorphism $\partial^-_t \circ \partial^-_t$ can be expressed on $\gs x$ as
$$\partial^-_t \circ \partial^-_t(\gs x)=\sum\limits_{\gs z\in \gs S(\mathbb G) } \left(
\sum\limits_{\psi\in \pi(\gs x, \gs z)} N(\psi)\cdot U^{t\cdot |\XX \cap \psi|+(2-t)\cdot |\OO\cap \psi|}\right) \cdot \gs z. $$

Take a pair of empty rectangles $r_1\in \rect(\gs x,\gs y)$ and $r_2\in \rect(\gs y,\gs z)$ so that $N(\psi)>0$ holds for the domain $r_1 * r_2=\psi$. There are three basic cases (see Figure 4.4 in \cite{ozsvath2015grid} for an illustration):
\begin{itemize}
\item $\gs x \setminus (\gs x \cap \gs z)$ consists of 4 elements. Then, $r_1$ and $r_2$ do not share any common corners. In this case, $\psi$ can only be decomposed as $r_1*r_2$ or $r_2*r_1$. Therefore, $N(\psi)=2$.
\item $\gs x \setminus (\gs x \cap \gs z)$ consists of 3 elements. In this case, the local multiplicities of $\psi$ are all 0 or 1. The corresponding region in the torus is a hexagon with five corners of $90^\circ$, and one of $270^\circ$. Now $N(\psi)=2$ again, and the two decompositions can be obtained by cutting $\psi$ at the $270^\circ$ corner horizontally and vertically.
\item $\gs x=\gs z$. In this case, $r_1$ and $r_2$ intersect along two edges, and $\psi$ is an annulus. Since the rectangles are empty, the height or width of this annulus is 1 (called a thin annulus), and $N(\psi)=1$. Hence $|\XX\cap \psi|=|\OO\cap \psi|=1$. 
\end{itemize}

Contributions from the first two cases cancel in pairs, because we are working modulo 2. So it is enough to consider the terms where $\psi$ is a thin annulus. Note that every thin annulus is a proper choice for $\psi$.  Now we have
$$\partial^-_t \circ \partial^-_t(\gs x)= \left(
\sum\limits_{\psi \mathrm{\ is\ a\ thin\ annulus}} N(\psi)\cdot U^2
\right) \cdot \gs x = 2n\cdot U^2\cdot \gs x =0.$$
\end{proof}

Let us introduce a grading on the preferred basis of $tGC^-\mathbb{G}$.
\begin{defn}
For $\gs x\in \mathbf{S}(\mathbb{G})$ the \emph{$t$-grading} of $U^\alpha \cdot \gs x$ is defined as 
$$\gr(U^\alpha \cdot \gs x)=M(\gs x)-t\cdot A(\gs x)-\alpha,$$
where $M$ is the Maslov and $A$ is the Alexander function on grid states.
\end{defn}

\begin{prop}\label{delta_grading}
The differential $\partial^-_t$ drops the $t$-grading by one.
\end{prop}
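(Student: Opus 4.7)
The plan is to verify the claim term-by-term on the generators: since $\partial^-_t$ is $\mathcal{R}$-linear, it suffices to show that for every $\gs x \in \gs S(\GG)$, every $\gs y \in \gs S(\GG)$, and every empty rectangle $r \in \rect(\gs x, \gs y)$, the summand $U^{t \cdot |\XX \cap r| + (2-t) \cdot |\OO \cap r|} \cdot \gs y$ of $\partial^-_t(\gs x)$ has $t$-grading equal to $\gr(\gs x) - 1$.

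Expanding the definition of $\gr$, this reduces to checking the identity
\[
\bigl(M(\gs x) - t \cdot A(\gs x)\bigr) - \bigl(M(\gs y) - t \cdot A(\gs y)\bigr) - t\cdot |\XX \cap r| - (2-t)\cdot |\OO \cap r| = 1.
\]
The natural next step is to plug in the two formulas already stated in the preceding proposition. For the Maslov change I would use (\ref{Maslovrectangle}); the crucial simplification is that $r$ is an \emph{empty} rectangle, so $|\gs x \cap \Int(r)| = 0$ and the formula collapses to $M(\gs x) - M(\gs y) = 1 - 2 \cdot |r \cap \OO|$. For the Alexander change I would use (\ref{Alexanderrectangle}) directly: $A(\gs x) - A(\gs y) = |r \cap \XX| - |r \cap \OO|$.

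After substitution, the $t$-dependent terms regroup as
\[
-t\cdot |r \cap \XX| + t\cdot |r \cap \OO| + t\cdot |r \cap \XX| + (2-t)\cdot |r \cap \OO| - 2 \cdot |r \cap \OO|,
\]
which telescopes to zero, leaving exactly $1$, as required. There is no real obstacle here; the only thing to be careful about is invoking the emptiness of $r$ to discard the interior-intersection contribution in the Maslov formula, and tracking the signs coming from the exponent of $U$ (which enters the $t$-grading with a minus).
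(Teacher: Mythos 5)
Your proof is correct and follows essentially the same route as the paper: isolate one summand of $\partial^-_t(\gs x)$, substitute the Maslov and Alexander change formulas for rectangles, and verify the cancellation. The paper leaves implicit the step you flag — that emptiness of $r$ kills the $|\gs x \cap \Int(r)|$ term in equation~(\ref{Maslovrectangle}) — so your version is, if anything, slightly more explicit on that point.
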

\begin{proof}[\textbf{\emph{Proof}}]
Let $\gs x\in \gs S(\GG)$, and consider a non-zero term $U^{t\cdot |\XX \cap r|+(2-t)\cdot |\OO\cap r|}\cdot \gs y$ of the sum in the definition of $\partial^-_t(\gs x)$ in \eqref{eq:deltax}. Since $r\in \rect(\gs x, \gs y)$, from \eqref{Maslovrectangle} and \eqref{Alexanderrectangle} we know that 
$$M(\gs y)=M(\gs x) -1 + 2\cdot |\OO\cap r| \quad \mathrm{and}$$
$$ A(\gs y)=A(\gs x) - |\XX \cap r| + |\OO\cap r|.$$
Therefore,
$$ \gr(U^{t\cdot |\XX \cap r|+(2-t)\cdot |\OO\cap r|}\cdot \gs y)=M(\gs y)-t\cdot A(\gs y) - t\cdot |\XX \cap r| - (2-t)\cdot |\OO\cap r| =  $$
$$ = (M(\gs x) -1 + 2\cdot |\OO\cap r|)-t\cdot (A(\gs x) - |\XX \cap r| + |\OO\cap r|) - t\cdot |\XX \cap r| - (2-t)\cdot |\OO\cap r| = $$
$$ = M(\gs x) - t\cdot A(\gs x) -1 = \gr(\gs x) -1. $$

\end{proof}

Let $tGC^-_d(\mathbb{G})$ denote the vector space over $\mathbb{F}_2$ generated by those monomials $U^\alpha \cdot \gs x$, where $\gs x \in \gs S(\mathbb{G})$ and $\gr (U^\alpha \cdot \gs x)=d$. If $x \in tGC^-(\mathbb{G})$ lies in $tGC^-_d(\mathbb{G})$ for some $d$, we say that $x$ is \emph{homogeneous}.

We can define the homology of $(tGC^-(\mathbb{G}),\partial^-_t)$:

\begin{defn}
$$ tGH^-_d(\mathbb{G})= \faktor{\Ker(\partial_t^-)\cap tGC^-_d(\mathbb{G})}{\Im(\partial_t^-)\cap tGC^-_d(\mathbb{G})} $$
$$  tGH^-(\mathbb{G}) = \bigoplus_d tGH^-_d(\mathbb{G}).$$
\end{defn}

\section{Invariance}
\subsection{Commutation invariance}\label{commutation_invariance}

The aim of this section is to prove that $tGH^-$ is invariant under commutation. We introduce the same notations as Section 5.1 in \cite{ozsvath2015grid}.

\begin{thm}\label{thm:commutationinvariance}
Let $\GG$ and $\GG'$ be two grid diagrams that differ by a commutation move. Then 
$$tGH^-(\mathbb{G})\cong tGH^-(\mathbb{G'})$$ as graded $\cR_t$-modules.
\end{thm}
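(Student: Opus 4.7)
The plan is to adapt the pentagon-counting argument of Section 5.1 of \cite{ozsvath2015grid} to the $t$-modified weights. Assume $\GG$ and $\GG'$ differ by a column commutation; we may draw the two vertical circles $\beta_i$ and $\gamma_i$ that are being swapped on a single torus so that they meet in exactly two points $a$ and $b$, while the other $n-1$ vertical circles of $\GG$ and $\GG'$ coincide. On this common torus we can view the grid states $\gs S(\GG)$ and $\gs S(\GG')$ simultaneously and define empty \emph{pentagons} $p\in \pent(\gs x,\gs y)$ going from $\gs x\in \gs S(\GG)$ to $\gs y\in \gs S(\GG')$, namely embedded pentagons with four right-angled corners at $\alpha$-$\beta/\gamma$ intersections and one corner at $a$ or $b$, whose interior is disjoint from both $\gs x$ and $\gs y$. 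Set
\[
 P_{\beta\gamma}(\gs x)=\sum_{\gs y\in \gs S(\GG')}\ \sum_{p\in \pent(\gs x,\gs y)}U^{t\cdot|\XX\cap p|+(2-t)\cdot|\OO\cap p|}\cdot \gs y,
\]
and define $P_{\gamma\beta}:tGC^-(\GG')\to tGC^-(\GG)$ symmetrically. Using the pentagon analogues of \eqref{Maslovrectangle} and \eqref{Alexanderrectangle}, a computation like that in Proposition~\ref{delta_grading} shows that $P_{\beta\gamma}$ is an $\cR_t$-linear map of $t$-grading zero.

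Next I would prove that $P_{\beta\gamma}$ is a chain map by computing $\partial^-_t\circ P_{\beta\gamma}+P_{\beta\gamma}\circ \partial^-_t$: each term is indexed by a domain $\psi$ from some $\gs x\in\gs S(\GG)$ to some $\gs z\in \gs S(\GG')$ that decomposes as a juxtaposition of an empty rectangle and an empty pentagon, contributing $U^{t|\XX\cap\psi|+(2-t)|\OO\cap\psi|}\cdot \gs z$. Following the classification in \cite[Section 5.1]{ozsvath2015grid}, a generic such $\psi$ admits exactly two such decompositions, so the associated pairs cancel modulo $2$; the leftover degenerate configurations are either forbidden by emptiness or produce forbidden corner patterns and vanish. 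To upgrade $P_{\beta\gamma}$ to a homotopy equivalence, I would introduce hexagon-counting maps $H_\beta:tGC^-(\GG)\to tGC^-(\GG)$ and $H_\gamma:tGC^-(\GG')\to tGC^-(\GG')$, again weighted by $U^{t|\XX\cap h|+(2-t)|\OO\cap h|}$ over empty hexagons, and verify
\[
 P_{\gamma\beta}\circ P_{\beta\gamma}+\partial^-_t\circ H_\beta+H_\beta\circ \partial^-_t=\Id
\]
together with the symmetric identity, again by a juxtaposition analysis (now splitting the relevant domains as rectangle+hexagon or pentagon+pentagon) that parallels the classical proof.

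The main obstacle is carrying out this case analysis with the new weights. In the classical theory one only needs to match $|\OO\cap \psi|$, and thin-annulus contributions cancel simply because $2\equiv 0\pmod 2$; here the exponent $t|\XX\cap\psi|+(2-t)|\OO\cap\psi|$ varies region by region, so one must check that both decompositions of any fixed $\psi$ contribute the \emph{same} monomial in $U^t$ and $U^{2-t}$. This reduces to the elementary observation that $|\XX\cap\psi|$ and $|\OO\cap\psi|$ depend only on $\psi$ itself and not on its factorization; once this is in hand, the pentagon and hexagon bookkeeping of \cite[Section 5.1]{ozsvath2015grid} goes through essentially verbatim, with the $t$-weighted monomials $U^{t|\XX\cap\psi|+(2-t)|\OO\cap\psi|}$ in place of the classical $U^{|\OO\cap\psi|}$, and the grading-zero property of $P_{\beta\gamma}$ ensures the resulting isomorphism is one of graded $\cR_t$-modules.
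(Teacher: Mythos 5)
Your outline follows exactly the same route as the paper: draw the two diagrams on one torus with $\beta_i$ and $\gamma_i$ intersecting in $a$ and $b$, define $t$-weighted pentagon-counting maps $P$ and $P'$ (your $P_{\beta\gamma}$, $P_{\gamma\beta}$), check grading preservation and the chain-map identity by domain juxtaposition, and then supply a hexagon-counting chain homotopy between $P'\circ P$ and the identity. Your central observation --- that the $t$-modified weight $U^{t|\XX\cap\psi|+(2-t)|\OO\cap\psi|}$ is intrinsic to the domain $\psi$ and therefore agrees for both factorizations of any $\psi$ with $N(\psi)=2$ --- is exactly the point that lets the classical bookkeeping carry over and is what the paper implicitly uses throughout.

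One local claim in your sketch is off and is worth correcting. You write that the leftover configurations with a unique decomposition ``are either forbidden by emptiness or produce forbidden corner patterns and vanish.'' In fact, for the chain-map identity they do not vanish: the domains with $N(\psi)=1$ are thin annuli together with a triangle, and (as in \cite[Lemma~5.1.4]{ozsvath2015grid}) their contributions cancel \emph{in pairs} against one another, not individually. Here your ``intrinsic to $\psi$'' remark does not apply verbatim, because the two canceling terms come from two \emph{distinct} domains; one must additionally verify that the paired domains $\psi_1$ and $\psi_2$ meet exactly the same $\XX$- and $\OO$-markings, so that they produce identical $U^t$/$U^{2-t}$ monomials. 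This extra check is routine (the triangles are marking-free and the annular parts pick up the same single $X$ and single $O$), but it is a separate step from the one your proposal isolates, and it needs to be stated. With that amendment your argument matches the paper's.
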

\begin{proof}
Consider two toroidal grid diagrams $\GG$ and $\GG'$ with grid number $n$, such that this latter can be obtained from $\GG$ by a commutation move. Draw these two diagrams onto the same torus in a way that the $X$- and $O$-markings are fixed, and two of the vertical circles are curved according to the right diagram of Figure \ref{fig:commutation_sametorus}. Denote the horizontal circles of $\GG$ by \mbox{$\mathbb{\alpha}=\{\alpha_1,...\alpha_n\}$} and its vertical circles by $\mathbb{\beta}=\{\beta_1,... \beta_n\}$. Then the set of horizontal circles of $\GG'$ is also $\mathbb{\alpha}$, but its vertical circles are different: $\mathbb{\gamma}=\{\beta_1,...,\beta_{i-1}, \gamma_i, \beta_{i+1},...,\beta_n\}$. Here we use the labeling compatible with the cyclic ordering of the toroidal grid, namely, $\beta_{k+1}$ for $k=1,...,n-1$ is the vertical circle immediately to the east of $\beta_k$. 

\begin{figure}[h]
\centering
\includegraphics[width=0.7\textwidth]{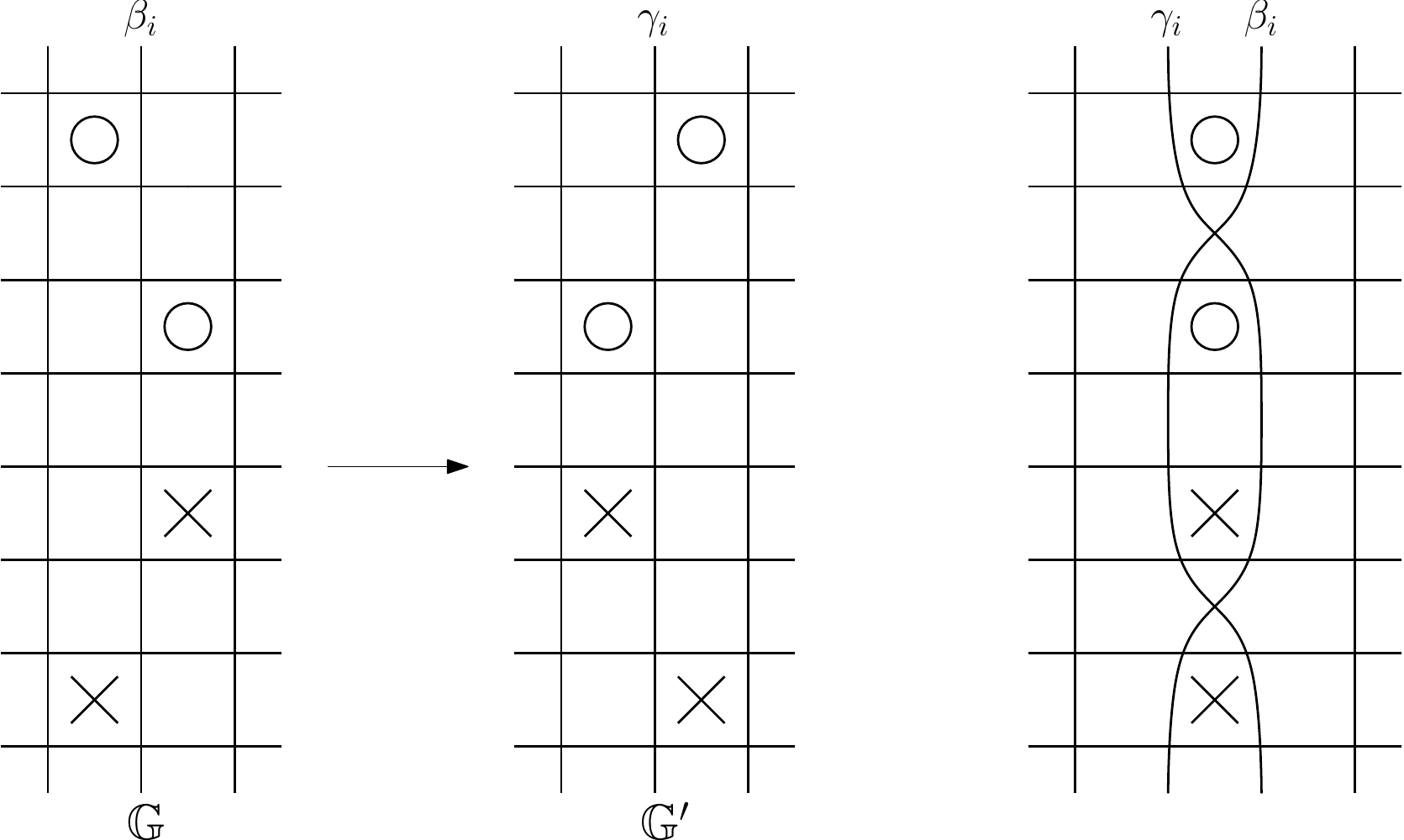}
\caption{Draving $\GG$ and $\GG'$ on the same torus. The place of the $X$- and $O$-markings is fixed, but the two vertical circles, $\beta_i$ and $\gamma_i$ are curved.}
\label{fig:commutation_sametorus}
\end{figure}

Draw these vertical circles so that $\beta_i$ meet $\gamma_i$ perpendicularly in two points, that do not lie on any of the horizontal circles. Denote these points by $a$ and $b$ due to the following convention: Take the complement of $\beta_i \cup \gamma_i$ in the grid. This consists of two bigons intersecting in $a$ and $b$. Consider the one, of which the western boundary is a part of $\beta_i$, and the eastern boundary is a part of $\gamma_i$. Let $a$ be the southern and $b$ the northern vertex of this bigon.

To make a connection between the chain complex of $\GG$ and the chain complex of $\GG'$, suitable pentagons, defined below will do a good service.

\begin{defn}
Consider two grid states $\gs x\in \gs S(\GG)$ and $\gs y'\in \gs S(\GG')$. A \emph{pentagon $p$ from $\gs x$ to $\gs y'$} is an embedded disk in the torus that satisfies the following conditions:
\begin{itemize}
\item The boundary of $p$ is the union of five arcs lying in $\alpha_j$, $\beta_j$ or $\gamma_i$ for $i$ and for some $j$.
\item Four corners of $p$ are in $\gs x \cup \gs y'$.
\item If we consider any corner point of $p$, it is the intersection of two curves of $\{\alpha_j, \beta_j, \gamma_i\}_{j=1}^n$. These two curves divide a small disk on the torus into four quadrants, and $p$ intersects exactly one of them.
\item The horizontal segments in $\partial p$ point from the components of $\gs x$ to the components of $\gs y'$.
\end{itemize}
\end{defn}

We use the notation $\Pent(\gs x, \gs y')$ for the set of pentagons going from $\gs x$ to $\gs y'$ \mbox{(Figure \ref{fig:pentagon}).}

\begin{figure}[h]
\centering
\includegraphics[width=0.5\textwidth]{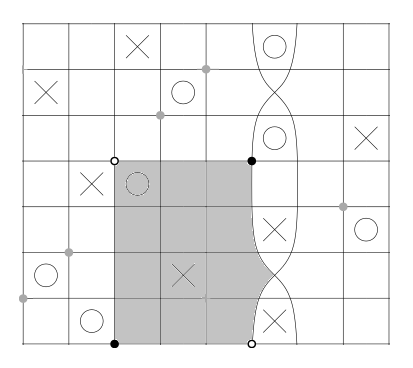}
\caption{An empty pentagon}
\label{fig:pentagon}
\end{figure}

Observe that the set $\Pent(\gs x, \gs y')$ consists of at most one element, and it is empty unless $\gs x$ and $\gs y'$ share exactly $n-2$ elements. From the above properties of a pentagon, it follows that its fifth corner point is the distinguished point $a$.

Pentagons from $\gs y'$ to $\gs x$ are defined similarly. The only difference is in the forth condition: the horizontal boundary segments point from the components of $\gs y'$ to the components of $\gs x$. The fifth vertex of such pentagons is $b$.

We call a pentagon $p\in \Pent(\gs x, \gs y')$ an \emph{empty pentagon} if 
$$\gs x \cap \Int(p)=\gs y' \cap \Int(p)=\emptyset.$$
The set of empty pentagons from $\gs x$ to $\gs y'$ is denoted by $\pent(\gs x, \gs y')$. (See Figure \ref{fig:pentagon}.)

Define the $\mathcal{R}$-module map $P: tGC^-(\mathbb{G}) \rightarrow tGC^-(\mathbb{G'})$ by the formula:
$$P(\gs x)= \sum\limits_{\gs {y'}\in \mathbf{S}(\mathbb{G'})}\left(\sum\limits_{p\in \pent (\gs x,\gs {y'})} U^{t\cdot |\XX \cap p|+(2-t)\cdot |\OO\cap p|}\right)\cdot \gs {y'} \quad \mathrm{for\ each\ }\gs x\in \gs S(\GG).$$

\begin{lemma}
For any $\gs x\in \mathbf{S(\mathbb{G)}}$, the map $P$ preserves the $t$-grading, i.e. \mbox{$\gr (P(\gs x))=\gr (\gs x)$.}
\end{lemma}
\begin{proof}[\textbf{\emph{Proof}}]
Consider one term of the sum. Let $\gs {y'}\in \gs S(\GG')$
such that there exists a pentagon $p\in \pent (\gs x,\gs {y'})$. It is easy to verify the following formulae, for  the details see Lemma 5.1.3. in \cite{ozsvath2015grid}:
$$M(\gs x)-M(\gs {y'})=-2\cdot |p\cap \OO|+2\cdot |\gs x \cap \Int (p)|,$$
$$A(\gs x)-A(\gs {y'})=|p\cap \XX| -|p \cap \OO|.$$
Using these we conclude that
$$\gr (U^{t\cdot |\XX \cap p|+(2-t)\cdot |\OO\cap p|}\cdot \gs {y'})=M(\gs {y'})- t\cdot A(\gs {y'})-(t\cdot |\XX \cap p|+(2-t)\cdot |\OO \cap p|)=$$
$$= M(\gs x)+2\cdot |\OO \cap p|-t\cdot (A(\gs x) -|\XX \cap p|+|\OO \cap p|)-(t\cdot |\XX\cap p|+(2-t)\cdot |\OO\cap p|)=$$
$$= M(\gs x)-t\cdot A(\gs x) = \gr (\gs x).$$
Hence, $\gr (P(\gs x))=\gr (\gs x)$.
\end{proof}

\begin{prop}\label{Pchainmap}
The map $P$ is a chain map.
\end{prop}
\begin{proof}[\textbf{\emph{Proof}}]
We need to show that $(P \circ \partial_t^-)(\gs x)=(\partial_t^- \circ P)(\gs x)$ for every $\gs x \in \SG$, which means the same as  $(P \circ \partial_t^-+\partial_t^- \circ P)(\gs x)=0$.

First we generalize the notion of a domain. By cutting the torus along the circles of $\alpha$, of $\beta$ and $\gamma_i$ we obtain a union of triangles, rectangles and pentagons, call them elementary regions. Fix grid states $\gs x\in \gs S(\GG)$ and $\gs y'\in \gs S(\GG')$. Let a domain $\psi$ from $\gs x$ to $\gs y'$ be a formal sum of the closures of the elementary regions with non-negative integer coefficients, which satisfies the condition that the horizontal boundary segments of $\psi$ point from the components of $\gs x$ to the components of $\gs y'$.

As in the proof of Theorem \ref{deltadelta0}, we can easily see that 
$$(P \circ \partial_t^- + \partial_t^- \circ P)(\gs x)= \sum \limits_{\gs {z'} \in \mathbf{S}(\mathbb{G'})} \left(\sum \limits_{\psi\in \pi (\gs x, \gs {z'})} N(\psi)\cdot U^{t\cdot |\XX \cap \psi|+(2-t)\cdot |\OO\cap \psi|}\right)\cdot \gs {z'},$$
where $N(\psi)$ denotes the number of decompositions of $\psi$ as a juxtaposition of an empty rectangle followed by an empty pentagon or as an empty pentagon followed by an empty rectangle. 

Consider a domain $\psi$ such that $N(\psi)>0$. We examine three basic cases (for illustration see Figure 5.5 and 5.6 in \cite{ozsvath2015grid}):
\begin{itemize}
\item $\gs x\setminus (\gs x \cap \gs z')$ consists of 4 elements. Then $\psi$ can be obtained as the juxtaposition of a pentagon $p$ and a rectangle $r$ which do not share any common corner. In this case the only decompositions of $\psi$ are $p*r$ and $r*p$. Therefore, $N(\psi)=2$.
\item $\gs x\setminus (\gs x \cap \gs z')$ consists of 3 elements. In this case either all of the local multiplicities of $\psi$ are 0 and 1, or also 2 appears as a local multiplicity. Either way, $\psi$ has seven corners, one of which is of degree $270^\circ$. We get two different decompositions of $\psi$ for a pentagon and a rectangle by cutting it at this corner in two different directions, thus $N(\psi)=2$.
\item $\gs x\setminus (\gs x \cap \gs z')$ consists of 1 element. Then $\psi$ goes around the torus. This can happen in two ways: the juxtaposition is either a vertical thin annulus together with a triangle or a horizontal thin annulus minus a triangle. In the former case, $\psi$ decomposes uniquely into a thin pentagon and a thin rectangle. In the latter case, there are two ways to cut $\psi$ near $a$: along $\beta_i$ or along $\gamma_i$. Therefore, $N(\psi)=2$.
\end{itemize}

Since we work over $\mathbb{F}_2$, it is enough to consider the cases where $N(\psi)=1$ (then $\psi$ is a thin annulus with a triangle). It can be shown that the contributions coming from these domains cancel in pairs. For the details see Lemma 5.1.4. in \cite{ozsvath2015grid}.
\end{proof}

Now we define an analogous $\cR_t$-module homomorphism $P': tGC^-(\mathbb{G'}) \rightarrow tGC^-(\mathbb{G)}$. For a grid state $\gs x \in \gs S(\GG')$, let
$$P'(\gs {x'})=\sum \limits_{\gs {y}\in \mathbf{S}(\mathbb{G})}\left(\sum\limits_{p\in \pent (\gs {x'},\gs y)} U^{t\cdot |\XX \cap p|+(2-t)\cdot |\OO\cap p|}\right)\cdot \gs y. $$
Henceforth we will show that the two maps $P$ and $P'$ are homotopy inverses of each other. For this aim we need the following notion:

\begin{defn}
Consider two grid states $\gs x$ and $\gs y\in \gs S(\GG)$. A \emph{hexagon} $h$ from $\gs x$ to $\gs y$  is an embedded disk in the torus that satisfies the following conditions:
\begin{itemize}
\item The boundary of $h$ is the union of six arcs lying in $\alpha_j$, $\beta_j$ or $\gamma_i$ for $i$ and for some $j$.
\item Four corners of $h$ are in $\gs x \cup \gs y$, and the two further corners are $a$ and $b$.
\item Any corner point of $h$ is the intersection of two curves of $\{\alpha_j, \beta_j, \gamma_i\}_{j=1}^n$. These two curves divide the torus into four quadrants, and $h$ lies in exactly one of them.
\item The horizontal segments of $\partial h$ point from the components of $\gs x$ to the components of $\gs y$.
\end{itemize}
We use the notation $\Hex(\gs x, \gs y)$ for the set of hexagons going from $\gs x$ to $\gs y$. 
\end{defn}

We call a hexagon $h\in \Hex(\gs x, \gs y)$ an \emph{empty hexagon} if
$$\gs x\cap \Int (h)=\gs y \cap \Int(h)=\emptyset.$$
The set of empty hexagons from $\gs x$ to $\gs y$ is denoted by $\hex(\gs x, \gs y)$.

Define the $\mathcal{R}$-module homomorphism $H: tGC^-(\GG)\rightarrow tGC^-(\GG)$ for each $\gs x \in \SG$ by the formula:
$$H(\gs x)=\sum \limits_{\gs {y}\in \gs S(\GG)}\left(\sum\limits_{h\in \hex (\gs x,\gs y)} U^{t\cdot |\XX \cap h|+(2-t)\cdot |\OO\cap h|}\right)\cdot \gs y.$$
An analogous map $H':tGC^-(\GG')\rightarrow tGC^-(\GG')$ can be defined by counting empty hexagons from $tGC^-(\GG')$ to itself.

\begin{lemma}\label{H_chain_homotopy}
The map $H: tGC^-(\mathbb{G})\rightarrow tGC^-(\mathbb{G})$ provides a chain homotopy from the chain map $P' \circ P$ to the identity map on $tGC^-(\mathbb{G}).$
\end{lemma}
\begin{proof}[\textbf{\emph{Proof}}]

First we show that $H$ increases the $t$-grading by 1. Consider a non-zero term of the sum in the definition of $H(\gs x)$. Let $\gs y \in \gs S(\GG)$ such that there exists a hexagon $h\in \hex(\gs x, \gs y)$. This $h$ can be augmented to a rectangle $r\in \Rect(\gs x, \gs y)$ by adding a bigon with vertices $a$ and $b$, that contains exactly one $X$-marking and one $O$-marking. Applying Equations \eqref{Maslovrectangle} and \eqref{Alexanderrectangle} we can derive that 
$$M(U^{t\cdot |\XX\cap h|+(2-t)\cdot |\OO\cap h|}\cdot \gs y)=M(\gs x)+1 \quad \mathrm{and} \quad A(U^{t\cdot |\XX\cap h|+(2-t)\cdot |\OO\cap h|}\cdot \gs y)=A(\gs x).$$

To have that $H$ is a chain homotopy from $P' \circ P$ to the identity map on $tGC^-(\mathbb{G})$, we have to verify that
$$\partial_t^- \circ H + H \circ \partial_t^-=\mathrm{Id}-P'\circ P, \quad \mathrm{that\ is,}$$
$$(\partial_t^- \circ H + H \circ \partial_t^-+P'\circ P)(\gs x)=\gs x \quad \mathrm{for\ any\ }\gs x\in \gs S(\GG).$$

The idea of the proof is the same as in the proof of Theorem \ref{deltadelta0} and Proposition \ref{Pchainmap}. For a domain $\psi\in \pi(\gs x, \gs z)$ denote by $N(\psi)$ the number of ways $\psi$ can be decomposed as one of the followings:
\begin{itemize}
\item $\psi=r*h$, where $r$ is an empty rectangle and $h$ is an empty hexagon;
\item $\psi=h*r$, where $h$ is an empty hexagon and $r$ is an empty rectangle;
\item $\psi=p*p'$, where $p$ is an empty pentagon from $\gs S(\GG)$ to $\gs S(\GG')$ and $p'$ is an empty pentagon from $\gs S(\GG')$ to $\gs S(\GG)$.
\end{itemize}

Obviously,
\begin{equation}\label{eq:Hchainmap}
\partial_t^- \circ H + H \circ \partial_t^-+P'\circ P)(\gs x)=\sum \limits_{\gs z \in \gs S(\GG)}\left(\sum\limits_{\psi \in \pi(\gs x, \gs z)}N(\psi)\cdot U^{t\cdot |\XX \cap \psi|+(2-t)\cdot |\OO\cap \psi|}\right)\cdot \gs z.
\end{equation}

For a domain $\psi$ for which $N(\psi)>0$, we have three basic cases again (for illustration see Figure 5.8 and 5.9 in \cite{ozsvath2015grid}):
\begin{itemize}
\item $\gs x \setminus (\gs x\cap \gs z)$ consists of 4 elements. Then $\psi$ can be decomposed into the juxtaposition of a hexagon $h$ and a rectangle $r$ which do not share any common corner. In this case the only decompositions of $\psi$ are $h*r$ and $r*h$. Therefore, $N(\psi)=2$.
\item $\gs x \setminus (\gs x\cap \gs z)$ consists of 3 elements. Then $\psi$ has eight vertices, and there are two possibilities: either seven corners are $90^\circ$ and one is $270^\circ$, or five corners are $90^\circ$ and three are $270^\circ$ (then two of the $270^\circ$ corners are at $a$ and $b$). In the former case, we get two different decompositions of $\psi$ by cutting at the $270^\circ$ corner in two different directions. In the latter case $N(\psi)=2$ again, since at one of the three corners we can cut in two different directions, while at the other two corners the direction for cutting is uniquely determined.
\item $\gs x= \gs z$. Then $\psi$ is an annulus enclosed by $\beta_i$ and one of the consecutive vertical circles. In this case $N(\psi)=1$.
\end{itemize}
There is a unique domain $\psi$ which has non-zero contribution in Equation \eqref{eq:Hchainmap}.

For this, $N(\psi)=1$ and $\psi\in \pi(\gs x, \gs x)$ contains no marking. Therefore the only term of the sum is $\gs x$.
\end{proof}

Gathering the above propositions, we finished the proof of Theorem \ref{thm:commutationinvariance}.
\end{proof}

\subsection{Stabilization invariance}\label{stabilizationinv}

Our goal in this section is to check whether $tGH^-$ is invariant under stabilization. In fact, we will need some modifications to make it invariant. For this, we will use the following notions and notations:

Let $X=\bigoplus\limits_d X_d$ and $Y=\bigoplus\limits_d Y_d$ be graded vector spaces, such that in at least one of them, the set of grades appearing in it is bounded above. Their tensor product $X\otimes Y=\bigoplus\limits_d (X\otimes Y)_d$ is the graded vector space with 
$$(X\otimes Y)_d=\bigoplus\limits_{d_1+d_2=d} X_{d_1}\otimes Y_{d_2}.$$

For a graded vector space $X$ and $s\in \RR$ the \emph{shift of $X$ by $s$} denoted by $X[\![s]\!]$, is the graded vector space that is isomorphic to $X$ as a vector space and the grading on $X[\![s]\!]$ is given by the relation $X[\![s]\!]_d=X_{d+s}$.

Consider the two-dimensional graded vector space $W$ with one generator of grading $1-t$ and another of grading $0$. Take any other graded vector space $X$. Then
\begin{equation}
X\otimes W \cong X\shift{1-t}\oplus X.
\end{equation}  

This procedure can be iterated, for example:
$$(X\otimes W)\otimes W=X\otimes W^{\otimes 2}\cong X\shift{2-2t}\oplus X\shift{1-t}\oplus X\shift{1-t}\oplus X$$

Now we can introduce the true invariant, which is an equivalence class of pairs consisting of a group and an integer. Concretely, for a grid diagram $\GG$, consider the pair $(tGH^-(\GG), n)$, where $n$ is the grid number of $\GG$. Let $\GG'$ be another grid diagram with grid number $n'$, such that $n\leq n'$. The two pairs $(tGH^-(\GG), n)$ and $(tGH^-(\GG'), n')$ are called equivalent if 
$$tGH^-(\GG')\cong tGH^-(\GG)\otimes W^{\otimes(n'-n)}.$$

Henceforward in this section we will prove that the equivalence class $\left[(tGH^-(\GG), n)\right]$ is indeed invariant under stabilization.

\begin{rem}
$\left[(tGH^-(\GG), n)\right]$ is invariant under commutation, since we have seen in the previous section that commutation moves do not change $tGH^-(\GG)$, and the grid number $n$ obviously remains the same.
\end{rem}

Let $\GG$ be a grid diagram. By performing a stabilization of type $X:SW$, we get the diagram $\GG'$. Number the markings in the way that $O_1$ is the newly-introduced $O$-marking, $O_2$ is in the consecutive row below $O_1$, $X_1$ and $X_2$ lie in the same row as $O_1$ and $O_2$, respectively, i.e. 
$\begin{array}{c|c}
X_1 & O_1\\
\hline
&X_2
\end{array}$.

Denote by $c$ the intersection point of the new horizontal and vertical circles in $\GG'$. Considering this point, we can partition the grid states of the stabilized diagram $\GG'$ into two parts, depending on whether or not they contain the intersection point $c$. Define the sets $\gs I(\GG')$ and $\gs N(\GG')$ so that $\gs x\in \gs I(\GG')$ if $c$ is included in $\gs x$, and $\gs x\in \gs N(\GG')$ if $c$ is not included in $\gs x$. Now $\gs S(\GG')=\gs I(\GG')\cup \gs N(\GG')$ gives a decomposition of $tGC^-(\GG')\cong I\oplus N$, where $I$ and $N$ denote the $\cR_t$-modules spanned by the grid states of $\gs I(\GG')$ and $\gs N(\GG')$ respectively.

There is a one-to-one correspondence between grid states of $\gs I(\GG')$ and grid states of $\gs S(\GG)$: Let 
$$e:\gs I(\GG')\rightarrow \gs S(\GG), \qquad \gs x\cup \{c\} \mapsto \gs x.$$

From this point on, we will work with certain types of domains again:

\begin{defn}
For grid states $\gs x\in \gs S(\GG')$ and $\gs y\in \gs I(\GG')$ a positive domain \mbox{$p\in \pi(\gs x, \gs y)$} is called \emph{into $L$} or of type $iL$, if it is trivial, or if it satisfies the following conditions:
\begin{itemize}
\item At each corner point in $\gs x \cup \gs y \setminus \{c\}$ at least three of the four adjoining squares have local multiplicity 0.
\item Three of the four squares attaching at the corner point $c$ have the same local multiplicity $k$, and at the southwest square meeting $c$ the local multiplicity of $p$ is $k-1$.
\item The grid state $\gs y$ has $2k+1$ components that are not in $\gs x$.
\end{itemize}
The set of domains of type $iL$ is denoted by $\pi^{iL}(\gs x, \gs y)$ (see Figure \ref{fig:destabilizationdomains}).
\end{defn}

\begin{defn}
Analogously, for grid states $\gs x\in \gs S(\GG')$ and $\gs y\in \gs I(\GG')$ a positive domain $p\in \pi(\gs x, \gs y)$ is called \emph{into $R$} or of type $iR$, if it satisfies the following conditions:
\begin{itemize}
\item At each corner point in $\gs x \cup \gs y \setminus \{c\}$ at least three of the four adjoining squares have local multiplicity 0.
\item Three of the four squares attaching at the corner point $c$ have the same local multiplicity $k$, and at the southeast square meeting $c$ the local multiplicity of $p$ is $k+1$.
\item The grid state $\gs y$ has $2k+2$ components that are not in $\gs x$.
\end{itemize}
The set of domains of type $iR$ is denoted by $\pi^{iR}(\gs x, \gs y)$ (see Figure \ref{fig:destabilizationdomains}).
\end{defn}

\begin{figure}[h]
\centering
\includegraphics[width=0.9\textwidth]{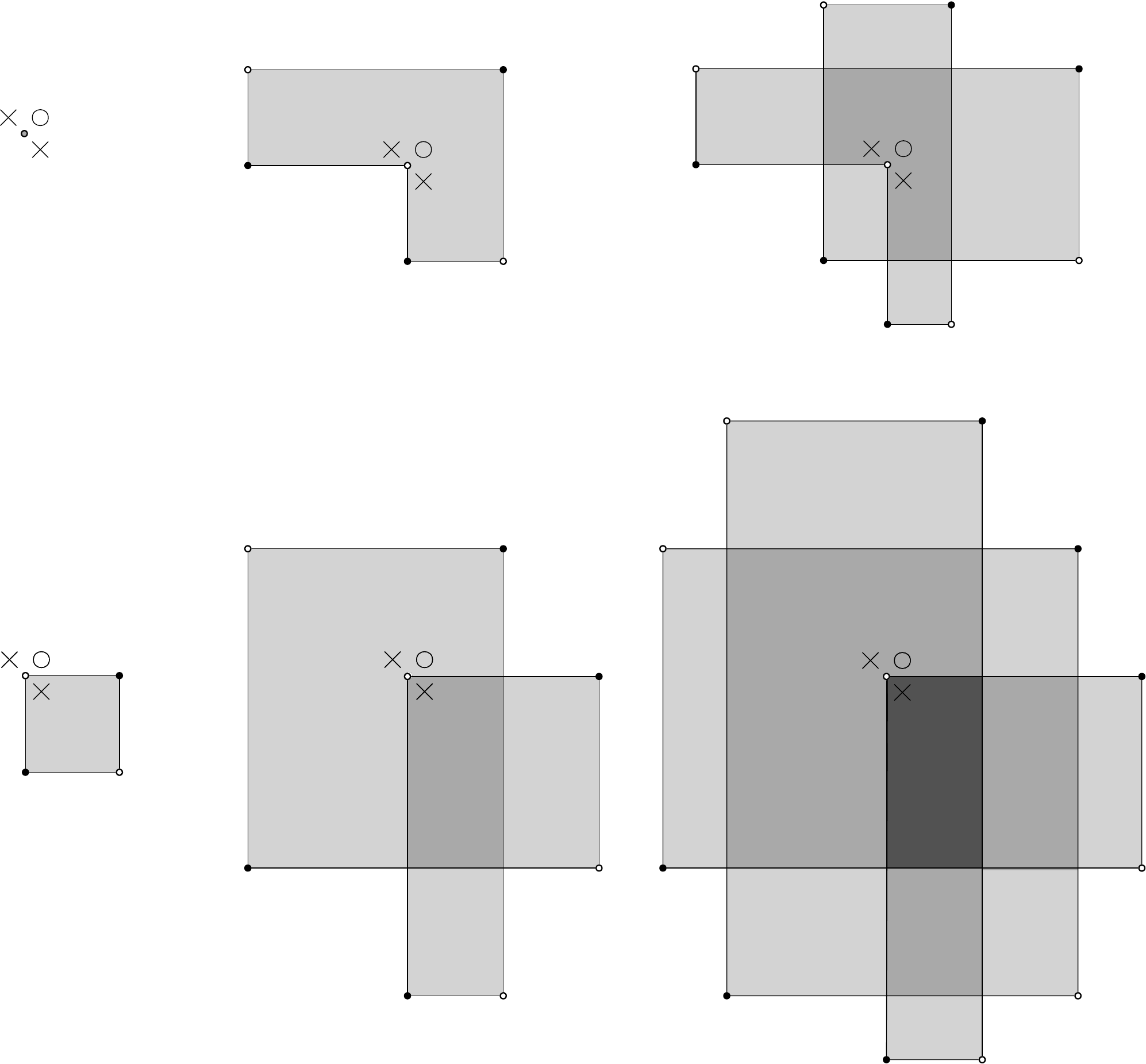}
\caption{Destabilization domains of type $iL$ (first row) and $iR$ (second row)}
\label{fig:destabilizationdomains}
\end{figure}

The domains of type $iL$ and $iR$ are called \emph{destabilization domains}. We use the notation $\pi^D=\pi^{iL}\cup \pi^{iR}$.

\begin{defn}
Let the \emph{complexity} of the trivial domain be one. For other destabilization domains the complexity tells the number of horizontal segments in the boundary of the domain.
\end{defn}

For example the upper right domain on Figure \ref{fig:destabilizationdomains} has complexity 5.
Observe that the domains of type $iL$ are the destabilization domains with odd complexity, while the domains of type $iR$ are the ones with even complexity.

\begin{lemma}\label{destabtorectangles}
Let $\gs x, \gs y\in \gs S(\GG')$ be grid states such that there exists a destabilization domain $p\in \pi(\gs x, \gs y)$ of complexity $k$. Then there is a sequence of grid states $\gs x_1,...,\gs x_k$ such that $\gs x_1=\gs x$, $\gs x_k=\gs y$, and empty rectangles $r_1,...,r_{k-1}$ with $r_i\in \rect(\gs x_i, \gs x_{i+1})$, so that $p$ decomposes as $r_1*...*r_{k-1}$. Among such rectangle sequences, there is a unique one in which every rectangle $r_i$ has an edge on the distinguished vertical circle going through $c$.
\end{lemma}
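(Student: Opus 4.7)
The plan is to prove this by induction on the complexity $k$, with the inductive step identifying a canonical first rectangle to peel off.

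For the base cases, when $k=1$ the domain is trivial so $\gs x = \gs y$ and there is nothing to decompose. When $k=2$, the destabilization domain is of type $iR$ with $k$-value $0$ around $c$, so $p$ is a single square (the SE square meeting $c$) which is itself an empty rectangle $r_1 \in \rect(\gs x, \gs y)$ whose NW corner is $c$; in particular its west edge lies on the distinguished vertical circle, and the decomposition is obviously unique.

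For the inductive step, assume the claim for all complexities $<k$, and let $p\in \pi^D(\gs x, \gs y)$ have complexity $k\geq 3$. The key structural observation is that destabilization domains have a staircase shape wrapping around $c$: the local multiplicity conditions at $c$ together with the condition that at each other corner of $p$ at least three of the four adjoining squares have multiplicity zero force $p$ to be a union of horizontal strips of height one whose widths form a monotone sequence. I would identify a canonical "first step" as the unique empty rectangle $r_1\in \rect(\gs x, \gs x_2)$ whose interior is contained in the topmost row touched by $p$ (equivalently, the row not shared with $\gs x_2$), and whose east edge lies on the distinguished vertical circle through $c$. Emptiness of $r_1$ follows from the corner-multiplicity condition applied to $p$, and the construction of $\gs x_2$ is forced: it agrees with $\gs x$ outside the two horizontal circles bounding $r_1$ and with $\gs y$ on those two circles restricted to $r_1$.

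Next, I would verify that $p - r_1 \in \pi(\gs x_2, \gs y)$ is again a destabilization domain, of complexity $k-1$: subtracting $r_1$ removes two horizontal boundary segments (one becomes internal and the other leaves the boundary), and shifts the local multiplicities around $c$ in a way that switches the type between $iL$ and $iR$ and decreases the $k$-value at $c$ by one, which is consistent with the complexity count $2k+1$ or $2k+2$. The induction hypothesis then gives the rest of the decomposition $r_2*\cdots*r_{k-1}$, each with an edge on the distinguished vertical circle. For uniqueness among decompositions whose rectangles all touch the distinguished vertical circle, observe that at each step the peeled rectangle must include the topmost (equivalently, uppermost missing) row of the remaining staircase and span up to the distinguished vertical circle, leaving no freedom in the choice.

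The main obstacle will be the careful case analysis around $c$ needed to single out $r_1$ in both the $iL$ and $iR$ cases, and in the boundary cases where the staircase wraps across the row or column containing $c$; once the geometric staircase picture is firmly established, the rest of the argument is bookkeeping with multiplicities and boundary segments.
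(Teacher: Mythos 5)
The paper does not prove this lemma itself; it simply defers to the proof of Lemma~13.3.11 in Ozsv\'ath--Stipsicz--Szab\'o's grid homology book. Your overall strategy---induction on complexity, peeling off one rectangle at a time and showing the remainder is again a destabilization domain with complexity dropped by one and type alternating between $iL$ and $iR$---is the right idea and is essentially what that cited proof does.

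However, the geometric picture you use to justify the peel is not accurate, and this is where the argument currently has a gap. You describe destabilization domains as ``a union of horizontal strips of height one whose widths form a monotone sequence.'' That is correct only for complexity $2$ and $3$. For complexity $\geq 4$ the local multiplicities at $c$ exceed $1$, so the domain is a sum of overlapping regions, and it wraps around the torus (the full vertical annulus through $c$ appears with positive multiplicity). In that setting ``the topmost row touched by $p$'' is not well-defined on the torus, and a rectangle chosen far from $c$ will not change the multiplicities at $c$, so subtracting it cannot switch the type $iL\leftrightarrow iR$ and drop the $k$-value by one, contrary to what you assert. The rectangle that must be peeled is the one with $c$ as a corner (the row immediately above $c$ for $iL$, immediately below for $iR$), and which of its vertical edges lies on the distinguished circle depends on the type; your fixed choice of ``east edge'' is not correct in both cases. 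You flag the case analysis around $c$ as the main obstacle, and you are right---but as written, the base cases and the inductive step are phrased around a staircase model that breaks down precisely when that case analysis matters. To repair the argument you would need to (i) identify the peeled rectangle by its incidence with $c$ rather than by a ``topmost row'' heuristic, (ii) verify emptiness of that rectangle from the corner-multiplicity condition (including the corners coming from the multiplicity-$(k-1)$ or $(k+1)$ square at $c$), and (iii) check that the remaining domain indeed satisfies the $iL$/$iR$ definition with $k$ replaced by $k$ or $k-1$ as appropriate, paying attention to the wrapping of the torus and the count $2k+1$ versus $2k+2$ of differing components. Also, your $k=2$ base case need not be a single square: it is a rectangle with $c$ at its northwest corner, of arbitrary size subject to emptiness; the conclusion (unique trivial decomposition, edge on the distinguished circle) still holds, but the statement should be corrected.
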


The proof of this lemma can be found as the proof of Lemma 13.3.11. in \cite{ozsvath2015grid}.

Now we are ready to introduce the chain map which gives the isomorphism between $tGC^-(\GG')$ and $tGC^-(\GG)\shift{1-t}\oplus tGC^-(\GG)$. For this, we will use the abbreviations $\Xbar:=\{X_1, X_3, ... , X_n \}$ and $\Obar:=\{O_1, O_3, ... , O_n\}$.

\begin{defn}
${}$\\
The $\cR_t$-module homomorphisms $D^{iL}:\ tGC^-(\GG') \rightarrow tGC^-(\GG){[\![1-t]\!]}$ and $D^{iR}:\ tGC^-(\GG') \rightarrow tGC^-(\GG)$ are defined on a grid state $\gs x\in \gs S(\GG)$ in the following way:
$$D^{iL}(\gs x):=\sum \limits_{\gs y\in I(\GG')}\sum\limits_{p\in \pi^{iL}(\gs x, \gs y)}U^{t\cdot |\Xbar \cap p|+(2-t)\cdot|\Obar\cap p|}\cdot e(\gs y)$$
$$D^{iR}(\gs x):=\sum \limits_{\gs y\in I(\GG')}\sum\limits_{p\in \pi^{iR}(\gs x, \gs y)}U^{t\cdot |\Xbar \cap p|+(2-t)\cdot|\Obar\cap p|}\cdot e(\gs y)$$
\end{defn}

\begin{prop}
$D^{iL}$ increases the grading by $(1-t)$, whereas $D^{iR}$ preserves the grading.
\end{prop}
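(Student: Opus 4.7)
The plan is to carry out a direct $t$-grading computation, summand by summand. I fix a destabilization domain $p\in\pi^D(\gs x,\gs y)$ of complexity $k$; it contributes a term $U^{t\cdot|\Xbar\cap p|+(2-t)\cdot|\Obar\cap p|}\cdot e(\gs y)$ to $D^{iL}(\gs x)$ or $D^{iR}(\gs x)$, and the goal is to compare the $t$-grading of this term in $tGC^-(\GG)$ with the $t$-grading of $\gs x$ in $tGC^-(\GG')$.

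First I will invoke Lemma \ref{destabtorectangles} to express $p=r_1*\cdots*r_{k-1}$ as an ordered juxtaposition of empty rectangles in $\GG'$ (the trivial domain corresponds to $k=1$ and contributes no rectangles). Iterating formulae \eqref{Maslovrectangle} and \eqref{Alexanderrectangle} along this sequence, using $|\gs x_i\cap\Int(r_i)|=0$ at every step, the shifts telescope to
\[ M_{\GG'}(\gs x)-M_{\GG'}(\gs y)=(k-1)-2|p\cap\OO|, \qquad A_{\GG'}(\gs x)-A_{\GG'}(\gs y)=|p\cap\XX|-|p\cap\OO|, \]
with all gradings and marking sets taken in $\GG'$.

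Next I will translate the $\GG'$-gradings of $\gs y$ into the $\GG$-gradings of $e(\gs y)$ via the standard constant shifts attached to an $X{:}SW$ stabilization (see Chapter 13 of \cite{ozsvath2015grid}), and split the marking counts as $|p\cap\XX|=|\Xbar\cap p|+|\{X_2\}\cap p|$ and $|p\cap\OO|=|\Obar\cap p|+|\{O_2\}\cap p|$, so as to single out the auxiliary markings $X_2$ and $O_2$ that do not enter the power of $U$ in the definitions of $D^{iL}, D^{iR}$. After collecting terms, the $t$-grading of the summand takes the shape $\gr(\gs x)+\Delta$, where $\Delta$ is an affine function of $t$ whose coefficients depend only on $k$, $|\{X_2\}\cap p|$, and $|\{O_2\}\cap p|$.

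The decisive step will be to identify these two local multiplicities purely in terms of $k$. Inspecting the shape of destabilization domains (cf.\ Figure \ref{fig:destabilizationdomains}), slicing them along the horizontal circles, and exploiting the prescribed multiplicity pattern at the corner $c$, should yield $|\{X_2\}\cap p|=|\{O_2\}\cap p|=(k-1)/2$ for type $iL$ (odd $k$), and $|\{X_2\}\cap p|=k/2$, $|\{O_2\}\cap p|=(k-2)/2$ for type $iR$ (even $k$). Substituting back into $\Delta$, the $k$-dependence cancels exactly, leaving $\Delta=1-t$ for $D^{iL}$ and $\Delta=0$ for $D^{iR}$, as asserted. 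I expect the hard part to be precisely this combinatorial identification of the multiplicities of $p$ at $X_2$ and $O_2$ from the complexity and type alone; once it is in place, the rest is a routine telescoping calculation.
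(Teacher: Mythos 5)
Your plan follows the paper's own argument closely: decompose the destabilization domain into empty rectangles via Lemma \ref{destabtorectangles}, telescope the formulas \eqref{Maslovrectangle} and \eqref{Alexanderrectangle} to get $M(\gs x)-M(\gs y)=(k-1)-2|p\cap\OO|$ and $A(\gs x)-A(\gs y)=|p\cap\XX|-|p\cap\OO|$, fold in the stabilization grading shift $\gr(e(\gs y))=\gr(\gs y)+1-t$, and rewrite everything in terms of $\Xbar$ and $\Obar$. That is exactly the route the paper takes.

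The concrete gap is in the marking you single out along with $X_2$. You claim $|\{O_2\}\cap p|=(k-1)/2$ for $iL$ (and $(k-2)/2$ for $iR$), but this is false in general. $O_2$ is the $O$-marking in the same row as $X_2$, sitting in some column far from $c$; a destabilization domain can overlap it with multiplicity $0,1,\dots$ depending on the particular grid states, not on the complexity. The multiplicity that actually is pinned down by the complexity is $|\{O_1\}\cap p|$: $O_1$ lives in the small square immediately northeast of $c$, and the definitions of $\pi^{iL}$, $\pi^{iR}$ prescribe the local multiplicity there to be $\ell=(k-1)/2$ (resp. $\ell=(k-2)/2$). For the same reason $|\{X_2\}\cap p|=\ell$ for $iL$ and $\ell+1$ for $iR$, since $X_2$ sits immediately southeast of $c$. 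So the ``hard part'' you anticipate is in fact immediate once you notice that $X_1$, $O_1$, $X_2$ all sit in the $2\times 2$ block around $c$, whose local multiplicities are part of the very definition of a destabilization domain; there is no slicing argument needed.

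Your confusion is understandable: the paper's printed definition $\Obar:=\{O_1,O_3,\dots,O_n\}$ omits $O_2$, which is what you followed. But this is inconsistent with the paper's own proof (which invokes ``an $iL$-type domain contains $O_1$ and $X_2$ with the same multiplicity'' and needs $(k-1)-2|\OO\cap p|=-2|\Obar\cap p|$, i.e.\ $|\OO\setminus\Obar\cap p|=\ell$), and inconsistent with the standard destabilization setup in \cite{ozsvath2015grid}, Chapter 13. The intended set is $\Obar=\OO\setminus\{O_1\}=\{O_2,\dots,O_n\}$. Once you replace $O_2$ by $O_1$ throughout your step 4, your predicted multiplicities are exactly right, the telescoping in your plan reproduces the paper's computation, and the proof is correct.
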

\begin{proof}
Let $\gs x, \gs y \in \mathbf{S}(\mathbb{G'})$. First observe that $\gr(e(\gs y))=\gr(\gs y)+1-t$. To get the result about $D^{iL}$, our goal is to show that for $p\in \pi^{iL}(\gs x,\gs y)$
$$\gr(\gs x)=\gr (U^{t \cdot |\Xbar \cap p|+(2-t)\cdot |\Obar\cap p|}\cdot \gs y).$$
Suppose that the complexity of $p$ is $k\in \mathbb{N}$. Then by Lemma \ref{destabtorectangles}, $p$ can be decomposed as the juxtaposition of $k-1$ rectangles. Consider the Maslov and the Alexander gradings. From Equations \eqref{Maslovrectangle} and \eqref{Alexanderrectangle} we have that 
$$M(\gs x)-M(\gs y)=(1-2\cdot|\OO\cap r_1|)+(1-2\cdot|\OO\cap r_2|)+...+(1-2\cdot|\OO\cap r_{k-1}|)= $$
$$=k-1 -2\cdot |\OO\cap p|=-2\cdot|\Obar\cap p|,$$
$$A(\gs x)-A(\gs y)=|\XX\cap p|-|\OO\cap p|=|\Xbar\cap p|-|\Obar\cap p|$$
because an $iL$-type domain contains $O_1$ and $X_2$ with the same multiplicity.
$$\gr(\gs x)-\gr (\gs y)=(M(\gs x)-M(\gs y))-t\cdot (A(\gs x)-A(\gs y))=-t\cdot|\Xbar\cap p|-(2-t)\cdot |\Obar\cap p|,$$ from which we get
$$\gr(\gs x)=\gr(U^{t \cdot |\Xbar \cap p|+(2-t)\cdot|\Obar \cap p|}\cdot e(\gs y))-1+t.$$

Now let $p\in \pi^{iR}(\gs x, \gs y)$ with complexity $k$.
$$M(\gs x)-M(\gs y)=k-1-2\cdot |\OO\cap p|=1-2\cdot |\Obar \cap p|,$$
$$A(\gs x)-A(\gs y)=|\XX\cap p|-|\OO\cap p|=|\Xbar\cap p|-|\Obar\cap p|+1$$ 
because in an $iR$-type domain the multiplicity of $X_2$ is one greater than the multiplicity of $O_1$.
$$\gr(\gs x)-\gr (\gs y)=(M(\gs x)-M(\gs y))-t\cdot (A(\gs x)-A(\gs y))=1-t-t \cdot |\Xbar \cap p|-(2-t)\cdot |\Obar \cap p|,$$ from which we get
$$\gr(\gs x)=\gr(U^{t \cdot |\Xbar \cap p|+(2-t)\cdot|\Obar \cap p|}\cdot e(\gs y)).$$

\end{proof}

Let $C:=tGC^-(\GG){[\![1-t]\!]}\oplus tGC^-(\GG)$, and consider the map $\partial: C\rightarrow C$ such that $\partial(x,y)=(\partial_t^-(x), \partial_t^-(y))$ holds for any $(x,y)\in C$. Obviously, the pair $(C, \partial)$ is a chain complex.

\begin{defn}
${}$\\
Let $D: tGC^-(\GG') \rightarrow C$ be the destabilization map defined for any $ x \in tGC^-(\GG')$ as
$$D( x):=(D^{iL}( x), D^{iR}(y))\in C.$$
\end{defn}

The map $D^{iL}$ can be decomposed as $D^{iL}=D^{iL}_1+D^{iL}_{>1}$, where  the subscript indicates the restriction on the complexity of the destabilization domains. Using this, we can draw the following schematic picture for $D$, where the top row represents $tGC^-(\GG')$ with its decomposition as $I\oplus N$, and the bottom row shows $C$. The map $D$ can be seen in the arrows connecting the two rows.

$$\begin{diagram}
I& \rCorresponds & N \\
\dTo^{D^{iL}_1} & \ldTo^{D^{iL}_{>1}} & \dTo^{D^{iR}}\\
tGC^-(\GG)_{[\![1-t]\!]} & \oplus & tGC^-(\GG)
\end{diagram}$$

\begin{prop}
The destabilization map $D$ is a chain map.
\end{prop}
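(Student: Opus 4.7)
The plan is to verify the two identities
\begin{equation*}
\partial^-_t \circ D^{iL} + D^{iL} \circ \partial^-_t = 0
\quad\text{and}\quad
\partial^-_t \circ D^{iR} + D^{iR} \circ \partial^-_t = 0
\end{equation*}
separately, which together give $\partial\circ D = D\circ\partial^-_t$ since $D = (D^{iL}, D^{iR})$ and $\partial$ acts componentwise by $\partial^-_t$. The strategy parallels the proof of Theorem \ref{deltadelta0} and Proposition \ref{Pchainmap}: fix a grid state $\gs x \in \gs S(\GG')$ and a target $\gs z \cup \{c\} \in \gs I(\GG')$, then expand each side as a sum over domains $\psi \in \pi(\gs x, \gs z \cup \{c\})$ weighted by $U^{t\cdot|\Xbar\cap\psi|+(2-t)\cdot|\Obar\cap\psi|}$, where the coefficient is $N(\psi)$, the number of decompositions of $\psi$ as a juxtaposition $r * p$ or $p * r$ of an empty rectangle and a destabilization domain of the fixed type ($iL$ or $iR$). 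The goal is to show that $N(\psi)$ is even for every $\psi$ that contributes, so the sum vanishes mod $2$.

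The essential tool for organizing the juxtapositions is Lemma \ref{destabtorectangles}, which lets me replace any destabilization domain $p$ of complexity $k$ by a canonical sequence of $k-1$ empty rectangles along the distinguished vertical circle through $c$. Writing a juxtaposition $r*p$ or $p*r$ as a sequence of rectangles in this way, I would then classify the resulting combined domain $\psi$ by how many components $\gs x$ and $\gs z\cup\{c\}$ differ in: the three generic cases are (i) the "extra" rectangle is disjoint from the $p$-rectangles, giving the two decompositions $r*p$ and $p*r$ and hence $N(\psi) = 2$; (ii) the extra rectangle shares a corner with a $p$-rectangle producing an L-shaped region with a single $270^\circ$ concave corner, which admits exactly two cuts (horizontal and vertical at this corner), again giving $N(\psi)=2$; and (iii) degenerate cases where the combined domain wraps around the torus as a thin annulus together with a destabilization piece, which require ad-hoc inspection. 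The weights $U^{t\cdot|\Xbar\cap\psi|+(2-t)\cdot|\Obar\cap\psi|}$ are identical for the two decompositions in each paired case, so the pair cancels mod $2$.

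The parity constraint on the complexity keeps the two identities independent: since $iL$ domains have odd complexity and $iR$ domains have even complexity, adding one rectangle swaps the parity, and the target type of destabilization domain matches correctly on both sides. I would finish by checking that the remaining degenerate configurations contribute either zero or cancel against each other after accounting for the rectangles that cross the distinguished square at $c$; the geometric picture here is essentially the one in Lemma 13.3.13 of \cite{ozsvath2015grid}, with the $U$-exponents adjusted by the $t$-modified weights.

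The main obstacle is the unbounded complexity of destabilization domains: unlike the rectangles and pentagons in earlier arguments, a domain of type $iL$ or $iR$ can be arbitrarily long along the distinguished vertical circle, so the enumeration of the shapes of $\psi$ is more delicate. Lemma \ref{destabtorectangles} reduces this to a finite local analysis at the interface between the rectangle $r$ and the destabilization domain $p$, but one must still verify that inserting the extra rectangle at any of the $k-1$ positions along the rectangle sequence of $p$ yields exactly one partner decomposition of opposite type, so that the count is indeed even. Once this local matching is established, the two chain-map identities follow, and hence $D$ is a chain map.
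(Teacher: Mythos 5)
Your outline mirrors the paper's own treatment: the paper does not write out the argument, but instead defers to the region-counting proof of Lemma 13.3.13 in \cite{ozsvath2015grid}, which is exactly the strategy you describe — reduce to the two component identities for $D^{iL}$ and $D^{iR}$ (valid because the differential on $C$ is diagonal), expand each side over composed domains $\psi$, show $N(\psi)$ is even by pairing decompositions, and use Lemma \ref{destabtorectangles} to control the unbounded complexity of destabilization domains. Both your sketch and the paper's citation leave the same details implicit (the reconciliation of $U$-exponents between the $\XX,\OO$-weights in $\partial_t^-$ and the $\Xbar,\Obar$-weights in $D^{iL},D^{iR}$, and the cancellation of the wrap-around degenerate configurations), so your proposal takes essentially the same route at a comparable level of rigor.
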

The proof of this proposition is based on counting regions on the grid diagram, and it is analogous to the proof of Lemma 13.3.13. in \cite{ozsvath2015grid}.

\begin{prop}\label{quasionfactors}
Suppose that $C$ and $\widetilde{C}$ are graded chain complexes over $\cR_t$, such that they are free modules, and the grades appearing in them is bounded above. Let $\alpha\in \RR_{\geq 0}$ and $f: C\rightarrow \widetilde{C}$ be a graded chain map. Then $f$ is a quasi-isomorphism if and only if it induces a quasi-isomorphism $\bar{f}: \faktor{C}{U^\alpha \cdot C}\rightarrow \faktor{\widetilde{C}}{U^\alpha \cdot \widetilde{C}}.$
\end{prop}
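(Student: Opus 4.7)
\textbf{Proof plan for Proposition \ref{quasionfactors}.} The natural approach is to replace the map $f$ by its mapping cone and thereby reduce the biconditional to a single statement about acyclicity of one chain complex. Set $D := \Cone(f)$, equipped with its induced grading and differential. Standard homological algebra gives that $f$ is a quasi-isomorphism iff $D$ is acyclic. Because tensoring/quotienting by $\cR_t/U^\alpha \cR_t$ commutes with the mapping cone construction, one also has a canonical identification $D/U^\alpha D \cong \Cone(\bar f)$, so $\bar f$ is a quasi-isomorphism iff $D/U^\alpha D$ is acyclic. Since $D$ is free over $\cR_t$ and has gradings bounded above (both properties being inherited from $C$ and $\widetilde{C}$), the proposition is reduced to the following statement: for any graded free chain complex $D$ over $\cR_t$ whose grades are bounded above, $D$ is acyclic if and only if $D/U^\alpha D$ is. The case $\alpha = 0$ is vacuous, so we may assume $\alpha > 0$.

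\textbf{The forward direction.} Since $\cR'$ is an integral domain (the leading term of a product of long power series is the product of the leading terms), and $D$ embeds in the free $\cR'$-module $D \otimes_{\cR_t}\cR'$, multiplication by $U^\alpha$ is injective on $D$. Hence there is a short exact sequence of chain complexes
\begin{equation*}
0 \longrightarrow D \xrightarrow{\;U^\alpha\;} D \longrightarrow D/U^\alpha D \longrightarrow 0.
\end{equation*}
If $D$ is acyclic, the induced long exact sequence in homology immediately forces $H_\ast(D/U^\alpha D) = 0$.

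\textbf{The reverse direction (the main obstacle).} Assume $D/U^\alpha D$ is acyclic; we must show every cycle of $D$ is a boundary. Let $d_0$ be an upper bound for the gradings appearing in $D$, and let $z \in D$ be a homogeneous cycle of grading $d$. Since $\bar z$ is a boundary in $D/U^\alpha D$, there exists $w_0 \in D_{d+1}$ with $z - \partial w_0 \in U^\alpha D$; write $z - \partial w_0 = U^\alpha z_1$. Because $U^\alpha$ is injective and commutes with $\partial$, the element $z_1$ is itself a cycle, and $\gr(z_1) = d + \alpha$. Iterating the same argument produces homogeneous elements $w_i, z_i$ with $z_i$ a cycle, $\gr(z_i) = d + i\alpha$, $\gr(w_i) = d + i\alpha + 1$, and
\begin{equation*}
z = \partial\bigl(w_0 + U^\alpha w_1 + \cdots + U^{(n-1)\alpha} w_{n-1}\bigr) + U^{n\alpha} z_n
\end{equation*}
for every $n \geq 1$. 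The decisive point is that $\gr(z_n) = d + n\alpha$ must satisfy $\gr(z_n) \leq d_0$ as long as $z_n \neq 0$; since $\alpha > 0$, this forces $z_n = 0$ once $n > (d_0 - d)/\alpha$. Thus the process terminates in finitely many steps and expresses $z$ as a genuine boundary in $D$, establishing acyclicity of $D$.

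The main obstacle is this second direction: a priori, the iteration $z = \partial(\sum U^{i\alpha} w_i) + U^{n\alpha} z_n$ need not converge in any natural topology. The grading bound $d \leq d_0$ is exactly what rescues the argument, turning what would be an infinite series into a finite sum. This is precisely why the hypothesis that grades are bounded above cannot be dropped.
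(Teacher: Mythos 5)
Your proof is correct, and the overall strategy matches the paper's: pass to the mapping cone $D = \Cone(f)$ to collapse the biconditional to a single acyclicity equivalence, observe $D/U^\alpha D \cong \Cone(\bar f)$, and use the short exact sequence $0 \to D \xrightarrow{\,U^\alpha\,} D \to D/U^\alpha D \to 0$ with its long exact sequence for the easy implication. Where you genuinely diverge is in the hard direction. The paper argues contrapositively (as its Lemma~\ref{HCnot0}): if $H(D) \neq 0$, pick a homogeneous non-zero class $x \in H(D)$ of \emph{maximal} grading; $x$ cannot equal $U^\alpha y$ for non-zero $y$ since that would raise the grading by $\alpha$, so by exactness $x$ survives to $H(D/U^\alpha D)$. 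You instead argue directly: given a homogeneous cycle $z$ whose reduction is a boundary, you iteratively peel off boundaries to obtain $z = \partial\bigl(\sum_{i<n} U^{i\alpha} w_i\bigr) + U^{n\alpha} z_n$ with $\gr(z_n) = d + n\alpha$, and the upper bound on gradings kills $z_n$ for large $n$. Both arguments exploit the same phenomenon — bounded grading forces a finite halt — from opposite ends. Your version has the small advantage that it only needs the set of gradings to be bounded above, not that the maximum is attained in $H(D)$, so the hypothesis does exactly the work it should; the paper's version is shorter.

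One small imprecision: the case $\alpha = 0$ is not vacuous. For $\alpha = 0$ one has $U^0 = 1$, hence $D/U^\alpha D = 0$ and $\bar f \colon 0 \to 0$ is trivially a quasi-isomorphism, so the biconditional would assert that \emph{every} graded chain map is a quasi-isomorphism — plainly false. You are right to reduce to $\alpha > 0$ (and indeed the paper's Lemma~\ref{HCnot0} is stated only for $\alpha \in \RR_{>0}$, even though the Proposition allows $\alpha \ge 0$), but the reason is that $\alpha = 0$ genuinely fails and is excluded by the applications ($\alpha \in \{t, 2-t\}$ with $t \in (0,2)$), not that it is vacuous.
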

\begin{proof}
Let $(C, \partial)$ and $(C',\partial')$ be two chain complexes over $\cR_t$. The \emph{mapping cone} of a chain map $f:C\rightarrow C'$ is the chain complex $\Cone(f)=(C\oplus C', \partial_{\Cone})$, where the differential $\partial_{\Cone}$ for an element $(c,c')\in C\oplus C'$ is defined as
$$\partial_{\Cone}(c,c')=(-\partial(c),\partial(c')+f(c)).$$
Observe that $\faktor{\Cone(f)}{U^\alpha\cdot \Cone(f)}\cong \Cone(\bar{f})$.

\begin{lemma}\label{quasiisomorphism}
For $C$, $C'$ chain complexes, a chain map $f:C\rightarrow C'$ is a quasi-isomorphism if and only if $H(\Cone(f))=0$.
\end{lemma}
For the proof of this lemma see \cite[Appendix A, Corollary A.3.3.]{ozsvath2015grid}.

\begin{lemma}\label{HCnot0}
Suppose that $C$ is a free, graded chain complex over $\cR_t$ that is bounded above. Then $H(C)\neq 0$ if and only if $H\left(\faktor{C}{U^\alpha\cdot C}\right)\neq 0$ for a fixed $\alpha\in \RR_{>0}$.
\end{lemma}
\begin{proof}[Proof of Lemma \ref{HCnot0}]
We assumed that $C$ is free, thus there exists a short exact sequence
$$\begin{diagram}
0 &\rTo& C &\rTo^{\cdot U^\alpha}& C &\rTo& \faktor{C}{U^\alpha\cdot C} &\rTo& 0
\end{diagram}$$
Considering the associated long exact sequence, it is easy to see that if $H(C)=0$, then $H\left(\faktor{C}{U^\alpha\cdot C}\right)=0$.

Now suppose that $H(C)\neq 0$. Since $C$ is bounded above, $H(C)$ has a homogeneous, non-zero element $x$ with maximal grading. Then, $x$ cannot be of the form $y\cdot U^\alpha$ for any $y\in H(C)$. Therefore $x$ must inject to $H\left(\faktor{C}{U^\alpha\cdot C}\right)$, and this way we got a non-zero element of $H\left(\faktor{C}{U^\alpha\cdot C}\right)$.
\end{proof}

From Lemma \ref{quasiisomorphism} we know that the map $f$ is a quasi-isomorphism if and only if $H(\Cone(f))=0$. According to Lemma \ref{HCnot0}, this is equivalent to \mbox{$H\left(\faktor{\Cone(f)}{U^\alpha\cdot \Cone(f)}\right)\neq 0$.} By our observation, this holds if and only if $H(\Cone(\bar{f}))=0$, which, by Lemma \ref{quasiisomorphism} again, is equivalent to $\bar{f}$ being a quasi-isomorphism.
\end{proof}

Now let us use the following notations.
$$C_1:=tGC^-(\GG') \quad \mathrm{and} \quad D_1:=tGC^-(\GG)\oplus tGC^-(\GG)\shift{1-t},$$
$$C_2:=\faktor{C_1}{U^t \cdot C_1} \quad \mathrm{and} \quad D_2:=\faktor{D_1}{U^t \cdot D_1},$$
$$C_3:=\faktor{C_2}{U^{2-t}\cdot C_2} \quad \mathrm{and} \quad D_3:=\faktor{D_2}{U^{2-t}\cdot D_2}.$$ Furthermore, define $f=D: C_1\rightarrow D_1$ and let $\bar{f}: C_2 \rightarrow D_2$ be the map induced by $f$. Finally, define $\bar{\bar{f}}: C_3 \rightarrow D_3$, the map induced by $\bar{f}$.
Apply Proposition \ref{quasionfactors} for $\alpha=2-t$. Then we get that if $\bar{\bar{f}}$ is a quasi-isomorphism, then $\bar{f}$ is also a quasi-isomorphism. Now apply Proposition \ref{quasionfactors} again, to have that if $\bar{f}$ is a quasi-isomorphism, then $f$ is also a quasi-isomorphism. To prove the stabilization invariance, we need to verify that $f=D$ is a quasi-isomorphism. For this, it is enough to show that $\bar{\bar{f}}$ is a quasi-isomorphism between $C_3$ and $D_3$.

\begin{defn}
The \emph{fully blocked grid chain complex} $\widetilde{GC}(\GG)$ associated to the grid diagram $\GG$ is a free $\mathbb{F}_2$-module generated by the grid states of $\GG$ with the differential
$$\tilde{\partial}_{\OO, \XX}(\gs x)=\sum\limits_{\gs y \gs S(\GG)}\sum\limits_{\{r\in \rect(\gs x, \gs y)| r\cap \OO=r \cap \XX=\emptyset\}}\gs y.$$
\end{defn}

\begin{prop}
$\bar{\bar{f}}$ is a quasi-isomorphism.
\end{prop}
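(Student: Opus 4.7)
The plan is to identify $C_3$ and $D_3$ with fully blocked grid complexes, then analyze $\bar{\bar{f}}$ as a chain map between them and invoke a triangular cancellation.

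First, I would observe that passing to $C_3$ kills every term of $\partial_t^-$ whose coefficient $U^{t|\XX\cap r|+(2-t)|\OO\cap r|}$ involves a positive power of $U^t$ (equivalently $r\cap\XX\ne\emptyset$) or of $U^{2-t}$ (equivalently $r\cap\OO\ne\emptyset$). The surviving terms are precisely the empty rectangles disjoint from all markings, giving the differential $\tilde{\partial}_{\OO,\XX}$. Therefore $C_3\cong \widetilde{GC}(\GG')$ as chain complexes over $\mathbb{F}_2$, and analogously $D_3\cong \widetilde{GC}(\GG)\shift{1-t}\oplus \widetilde{GC}(\GG)$.

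Next I would describe $\bar{\bar{f}}=(\bar{\bar{D}}^{iL},\bar{\bar{D}}^{iR})$ on the blocked side. Only destabilization domains $p$ satisfying $p\cap \Xbar=p\cap\Obar=\emptyset$ survive. The trivial $iL$-domain has complexity $1$, contains no markings, and realizes the identification $e$, so $\bar{\bar{D}}^{iL}_1$ restricts to an isomorphism $I\to \widetilde{GC}(\GG)\shift{1-t}$. Every non-trivial $iL$-domain has complexity at least $3$, and the local multiplicity pattern near $c$ forces it to cover $X_1\in\Xbar$ or $O_1\in\Obar$ with positive multiplicity, so $\bar{\bar{D}}^{iL}_{>1}\equiv 0$ in $C_3$. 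Relative to the decompositions $C_3=I\oplus N$ and $D_3=\widetilde{GC}(\GG)\shift{1-t}\oplus\widetilde{GC}(\GG)$, the map $\bar{\bar{f}}$ then takes the triangular form
$$\bar{\bar{f}}=\begin{pmatrix}\bar{\bar{D}}^{iL}_1 & 0 \\ \bar{\bar{D}}^{iR}|_I & \bar{\bar{D}}^{iR}|_N\end{pmatrix},$$
whose top-left block is an isomorphism.

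A standard cancellation argument contracts the acyclic summand in $\Cone(\bar{\bar{f}})$ coming from the top-left isomorphism, reducing the claim to showing that the map $N\to\widetilde{GC}(\GG)$ induced by $\bar{\bar{D}}^{iR}|_N$ is itself a quasi-isomorphism. I would finish by producing a canonical bijection $\gs N(\GG')\leftrightarrow \gs S(\GG)$ obtained by contracting the row and column through $c$, and checking that under this identification the surviving minimal-complexity $iR$-rectangles correspond exactly to the empty unmarked rectangles counted by $\tilde{\partial}_{\OO,\XX}$ on $\GG$. The main obstacle is precisely this final identification: while the cancellation step is formal, the correspondence of $iR$-domains disjoint from $\Xbar\cup\Obar$ with empty unmarked rectangles in $\GG$ requires a careful case analysis, tracking how each empty rectangle in $\GG$ lifts to exactly one such $iR$-domain in $\GG'$. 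This parallels the proof of Lemma 13.3.16 in \cite{ozsvath2015grid}.
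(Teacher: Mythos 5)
Your proposal is correct and takes essentially the same route as the paper. The paper records the identifications $C_3\cong \widetilde{GC}(\GG')$ and $D_3\cong \widetilde{GC}(\GG)\shift{1-t}\oplus\widetilde{GC}(\GG)$ and then cites the proof of Proposition 5.2.2 of \cite{ozsvath2015grid}; you simply unpack that citation into the explicit triangular form and cancellation step, which is precisely the argument the cited proof carries out.
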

\begin{proof}
Recall the notion of the fully blocked grid homology (\cite{ozsvath2015grid}), which is the simplest version of grid homology. 

Observe that $C_3\cong \widetilde{GC}(\GG')$ and $D_3 \cong \widetilde{GC}(\GG)\oplus \widetilde{GC}(\GG)_{\shift{1-t}}$. From the proof of \cite[ Proposition 5.2.2]{ozsvath2015grid}, it follows that there is a quasi-isomorphism from $C_3$ to $D_3$, which is in fact $\bar{\bar{f}}$.
\end{proof}

\section{The knot invariant $\Upsilon$}\label{upsilonchapter}
\subsection{The definition of $\Upsilon$}

Let $M$ be a module over $\cR_t$. The torsion submodule $\Tors(M)$ of $M$ is
$$\Tors(M)=\{m\in M \ |\ \mathrm{\ there\ is\ a\ non-zero\ } p \in \cR_t \mathrm{\ with\ } p\cdot m=0\}.$$

\begin{defn}\label{def:Upsilon}
For $t\in [1,2]$,
$$\Upsilon_\mathbb{G}(t):=\max \{\gr(x) | x\in tGH^-(\mathbb{G}),\ x\ \mathrm{is\ homogeneous\ and\ non-torsion}\}$$
\end{defn}

\begin{thm}\label{upsiloninvariantundergridmove}
Let $\GG$ and $\GG'$ be two grid diagrams such that $\GG'$ can be obtained from $\GG$ by a grid move. If $t\in [1,2]$, then $\Upsilon_\mathbb{G}(t)=\Upsilon_\mathbb{G'}(t)$.
\end{thm}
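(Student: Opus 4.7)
By Cromwell's theorem (the theorem just before Section 2.3), any two grid diagrams representing isotopic knots are connected by a finite sequence of commutations and (de)stabilizations. Since $\Upsilon$ is defined via gradings and the torsion submodule of $tGH^{-}$, the plan is to verify directly that both types of grid moves leave $\Upsilon_{\GG}(t)$ unchanged for $t\in[1,2]$.

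\textbf{Commutation.} By Theorem \ref{thm:commutationinvariance}, a commutation induces an isomorphism $tGH^-(\GG)\cong tGH^-(\GG')$ of graded $\cR_t$-modules. Any such isomorphism sends homogeneous elements of grading $d$ to homogeneous elements of grading $d$, and it restricts to an isomorphism between torsion submodules (since being torsion is an invariant of the $\cR_t$-module structure). Hence the supremum in Definition \ref{def:Upsilon} is the same for $\GG$ and $\GG'$, and so $\Upsilon_{\GG}(t)=\Upsilon_{\GG'}(t)$ for every $t$.

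\textbf{Stabilization.} Say $\GG'$ is a stabilization of $\GG$; the destabilization case is symmetric. Combining the work of Section \ref{stabilizationinv} (namely the fact that the destabilization map $D$ is a quasi-isomorphism onto $tGC^-(\GG)\shift{1-t}\oplus tGC^-(\GG)$), passing to homology gives a graded $\cR_t$-module isomorphism
\begin{equation*}
tGH^-(\GG')\ \cong\ tGH^-(\GG)\otimes W\ \cong\ tGH^-(\GG)\shift{1-t}\oplus tGH^-(\GG).
\end{equation*}
So it suffices to check that this direct-sum-with-shift operation preserves the maximum grading of a non-torsion homogeneous element when $t\in[1,2]$. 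Writing $M=tGH^-(\GG)$ and letting $g_{\max}=\Upsilon_{\GG}(t)$, a homogeneous element of grade $d$ in $M\shift{1-t}\oplus M$ is a pair $(m_1,m_2)$ with $m_1,m_2$ homogeneous of the appropriate grades, and it is non-torsion iff at least one of $m_1,m_2$ is non-torsion. Since the shift operation does not alter the torsion submodule as an abstract $\cR_t$-module, the maximum non-torsion grade in $M\shift{1-t}$ is $g_{\max}+(1-t)$, and therefore
\begin{equation*}
\Upsilon_{\GG'}(t)\ =\ \max\bigl(g_{\max},\ g_{\max}+(1-t)\bigr).
\end{equation*}

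\textbf{Where the hypothesis $t\ge 1$ enters.} Exactly here: for $t\in[1,2]$ we have $1-t\le 0$, so the maximum is $g_{\max}=\Upsilon_{\GG}(t)$, and we conclude $\Upsilon_{\GG'}(t)=\Upsilon_{\GG}(t)$. (This is also the step that explains why the definition of $\Upsilon$ in this paper is made on the interval $[1,2]$ rather than on all of $[0,2]$; outside this range the shifted summand could contribute a strictly larger non-torsion grade, so one would need a more delicate normalization.) The main technical obstacle is thus not the invariance argument itself but rather ensuring that all auxiliary statements invoked — the commutation isomorphism as graded $\cR_t$-modules, and the stabilization isomorphism with its precise grading shift — are compatible with the definition of $\Upsilon$; once both are in hand, the proof reduces to the elementary observation above.
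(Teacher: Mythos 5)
Your proof is correct and takes essentially the same route as the paper's: the commutation case is dispatched by Theorem \ref{thm:commutationinvariance}, and the stabilization case follows from the graded isomorphism $tGH^-(\GG')\cong tGH^-(\GG)\shift{1-t}\oplus tGH^-(\GG)$ together with the observation that $1-t\le 0$ for $t\in[1,2]$, so the shifted summand cannot increase the maximal non-torsion grade. The extra justifications you spell out (that a graded $\cR_t$-module isomorphism restricts to an isomorphism of torsion submodules, and that an element of a direct sum is non-torsion iff some component is) are correct and implicit in the paper's shorter argument.
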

\begin{proof}
If $\GG'$ can be obtained from $\GG$ by a commutation, then from Theorem \ref{thm:commutationinvariance} we know that $tGH^-$ is invariant under commutation, thus $\Upsilon_\mathbb{G}(t)=\Upsilon_\mathbb{G'}(t)$.

Suppose that $\GG'$ can be obtained from $\GG$ by a stabilization, and let $d$ denote the maximal grade which appears among homogeneous non-torsion elements of $tGH^-(\GG)$. Then the maximal grade of homogeneous non-torsion elements of $tGH^-(\GG)_{\shift{1-t}}$ is $d+1-t$. In case of $t\in [1,2]$, $d+1-t\leq d$, thus the maximal grade of homogeneous non-torsion elements of $tGH^-(\GG') \cong tGH^-(\GG)\oplus tGH^-(\GG)_{\shift{1-t}}$ is also $d$.
\end{proof}

In case of $t\in [0,1]$ the above proof does not work, but the $\Upsilon$ invariant can be defined by extending it to $t\in [0,2]$ symmetrically. For $t\in [0,1]$, let $\Upsilon_\mathbb{G}(t)=\Upsilon_\mathbb{G}(2-t)$. Obviously, $\Upsilon_\mathbb{G}(t)$ is also a knot invariant for $t\in [0,1]$.

Consequently, for a knot $K$ and $t\in [0,2]$ we can define $\Upsilon_K(t)$ by taking a grid diagram $G$ representing $K$:
$$\Upsilon_K(t):=\Upsilon_\mathbb{G}(t).$$
According to Theorem \ref{upsiloninvariantundergridmove}, this is well-defined, and the following corollary is immediate:

\begin{cor}
For $t\in [1,2]$, $\Upsilon_K(t)$ is a knot invariant, that is, if $K_1$ and $K_2$ are isotopic knots, then $\Upsilon_{K_1}(t)=\Upsilon_{K_2}(t)$.
\end{cor}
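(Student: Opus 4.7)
The plan is to reduce the corollary to an immediate consequence of Theorem \ref{upsiloninvariantundergridmove} via Cromwell's theorem (the unnamed theorem attributed to Cromwell in Section \ref{gridchapter}), which asserts that two toroidal grid diagrams represent equivalent (oriented) links if and only if they can be connected by a finite sequence of grid moves.

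First, I would pick any grid diagrams $\GG_1$ and $\GG_2$ representing $K_1$ and $K_2$, respectively. Since $K_1$ and $K_2$ are isotopic, Cromwell's theorem furnishes a finite sequence of toroidal grid diagrams $\GG_1 = \GG^{(0)}, \GG^{(1)}, \ldots, \GG^{(N)} = \GG_2$ in which each consecutive pair $\GG^{(i)}, \GG^{(i+1)}$ differs by a single grid move (commutation, stabilization, or destabilization).

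Next, I would apply Theorem \ref{upsiloninvariantundergridmove} inductively along this sequence: for every $i$ and every $t\in [1,2]$, the theorem yields $\Upsilon_{\GG^{(i)}}(t) = \Upsilon_{\GG^{(i+1)}}(t)$. Note that destabilization is just the inverse of stabilization, so the equality of $\Upsilon$ values is symmetric in the two diagrams of each pair and no direction-dependent issue arises. Chaining the $N$ equalities gives $\Upsilon_{\GG_1}(t) = \Upsilon_{\GG_2}(t)$, which, by the definition $\Upsilon_K(t) := \Upsilon_\GG(t)$ for any grid diagram $\GG$ of $K$, is exactly the desired equality $\Upsilon_{K_1}(t) = \Upsilon_{K_2}(t)$.

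There is essentially no obstacle here: the hard work was already carried out in Section \ref{commutation_invariance} and Section \ref{stabilizationinv} (culminating in Theorem \ref{upsiloninvariantundergridmove}), and the corollary is pure bookkeeping once Cromwell's theorem is invoked. The only point worth flagging is that $\Upsilon_K(t)$ must be shown to be independent of the chosen grid diagram for $K$, but this is precisely what the above chain of grid moves establishes; the definition $\Upsilon_K(t):=\Upsilon_\GG(t)$ is well-posed for $t\in [1,2]$ by the same argument applied to two different grid diagrams of the \emph{same} knot.
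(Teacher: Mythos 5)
Your proposal is correct and is exactly the (unwritten) argument the paper has in mind: the paper simply declares the corollary ``immediate'' from Theorem \ref{upsiloninvariantundergridmove} together with Cromwell's theorem and the definition $\Upsilon_K(t):=\Upsilon_\GG(t)$, and you have spelled out that chain of reasoning, including the minor but worthwhile remark that destabilization is handled symmetrically as the inverse of stabilization.
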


\subsection{Crossing changes}

In this section we examine how $\Upsilon$ changes under crossing changes.

Let $K_+$ and $K_-$ be two knots which differ only in a crossing.

\begin{prop}\label{crossingchangemaps}
There are $\cR_t$-module maps
$$C_-: tGH^-(K_+)\rightarrow tGH^-(K_-) \quad \mathrm{and} \quad C_+: tGH^-(K_-)\rightarrow tGH^-(K_+)$$
where $C_-$ is homogeneous and preserves the grading, and $C_+$ is homogeneous of degree $-2+t$.
$C_-\circ C_+$ and $C_+\circ C_-$ are both the multiplication by $U^{2-t}$.
\end{prop}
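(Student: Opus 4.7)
The plan is to adapt the commutation invariance construction from Section \ref{commutation_invariance} to the cross-commutation setting. Since $K_+$ and $K_-$ differ by a crossing change, they are represented by grid diagrams $\GG_+$ and $\GG_-$ that differ by a single cross-commutation of two consecutive columns. Following Section \ref{commutation_invariance}, I would draw both diagrams on the same torus with all $\XX$- and $\OO$-markings fixed and the swapped vertical circles $\beta_i \subset \GG_+$ and $\gamma_i \subset \GG_-$ meeting transversely at two points $a$ and $b$. The crucial difference from ordinary commutation is that the definition of cross-commutation forces the intervals associated to the swapped columns to overlap without containment; as a result one of the two bigons bounded by $\beta_i \cup \gamma_i$ must contain a single $O$-marking (and no $X$-marking), while the other bigon remains empty of markings.

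Next, I would define $\cR_t$-module maps $C_-: tGC^-(\GG_+) \to tGC^-(\GG_-)$ and $C_+: tGC^-(\GG_-) \to tGC^-(\GG_+)$ using the same pentagon-counting formulas as the maps $P$ and $P'$ in Section \ref{commutation_invariance}. The proof that both are chain maps proceeds by the same pentagon/rectangle juxtaposition analysis as in the proof of Proposition \ref{Pchainmap}. For the grading claim, the pentagons contributing to $C_-$ have their fifth corner at $a$ and avoid the $O$-marked bigon; the grading computation from Section \ref{commutation_invariance} therefore goes through unchanged, and $C_-$ preserves the $t$-grading. By contrast, the pentagons contributing to $C_+$ have their fifth corner at $b$, and the geometric configuration forces each such pentagon to include the $O$-marked square; this additional $O$ contributes an extra factor of $U^{2-t}$ compared with the commutation case, lowering the grading by $2-t$ and giving $C_+$ the degree $-2+t$ stated in the proposition.

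To establish the composition identities, I would construct hexagon maps analogous to the chain homotopy $H$ of Lemma \ref{H_chain_homotopy} on both $tGC^-(\GG_+)$ and $tGC^-(\GG_-)$, and examine domains $\psi$ that decompose either as two empty pentagons or as an empty rectangle composed with an empty hexagon. Exactly as in the proof of Lemma \ref{H_chain_homotopy}, all contributions with $N(\psi) = 2$ cancel modulo $2$, and the surviving terms come from thin annuli. Whereas in the ordinary commutation case the surviving annulus was empty of markings and produced the identity, in the cross-commutation case the surviving annulus is forced to traverse the bigon containing the $O$-marking, and hence contains exactly one $O$ and no $X$. Each such annulus contributes a factor of $U^{2-t}$ on the generator it starts from, yielding chain homotopies $C_+ \circ C_- \simeq U^{2-t}\cdot \Id$ and $C_- \circ C_+ \simeq U^{2-t}\cdot \Id$. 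Passing to homology yields the claimed equalities.

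The main obstacle is this last step: a careful case analysis is needed to confirm that precisely one thin annulus survives on each generator after the pairwise cancellations, and that it always contains one $O$-marking and no $X$-markings, for both orders of composition and regardless of the specific type of cross-commutation performed. In particular, one must rule out surviving annuli containing an $X$-marking, which would otherwise contribute an unwanted $U^t$ term rather than $U^{2-t}$. This is combinatorial bookkeeping in the spirit of Lemma \ref{H_chain_homotopy}, but it is the step where the interplay between cross-commutation and the $t$-modified weights genuinely has to be checked.
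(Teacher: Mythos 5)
Your overall strategy---pentagon chain maps, hexagon chain homotopies, and pairwise cancellation of domains---is the paper's strategy too, so the outline is sound. But there is a genuine geometric error at the outset that undermines the details.

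You assert that $\beta_i$ and $\gamma_i$ meet transversely at two points $a$ and $b$, as in the commutation case, with one of the two resulting bigons containing a single $O$-marking and the other empty. This is impossible for a cross-commutation. The cross-commutation condition forces the four markings of the two swapped columns to interleave in height; since the markings are pinned on the torus and $\gamma_i$ must separate them in the opposite way from $\beta_i$, the interleaving forces $\beta_i$ and $\gamma_i$ to cross \emph{four} times, not two. The complement of $\beta_i\cup\gamma_i$ then has five components, four of which are bigons each containing exactly one marking (two carry an $X$, two carry an $O$). The paper singles out two of the four intersection points: $s$, the shared corner of the $O_2$-bigon and one $X$-bigon, and $t$, the shared corner of the two $X$-bigons (which are adjacent precisely because it is a cross-commutation). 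The map $c_-$ then counts empty pentagons with a corner at $s$, and $c_+$ counts those with a corner at $t$.

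Because your picture is wrong, the grading and composition claims are not actually established. You say a $C_+$-pentagon is ``forced to include the $O$-marked square,'' but with four bigons (two of them $X$-bigons) this requires a real case analysis; the paper computes the degree shift of $c_-$ by dividing $\beta_i$ into four arcs $\mathbf{A},\mathbf{B},\mathbf{C},\mathbf{D}$ according to which bigon adjoins them, and checks the four cases separately. Likewise the chain homotopies must count hexagons with two \emph{consecutive} corners at $s$ and $t$ in a specified cyclic order, and verifying that the surviving annular domain in $\partial^-_t\circ H_\pm + H_\pm\circ\partial^-_t = c_\mp\circ c_\pm + U^{2-t}$ contains exactly one $O$ and no $X$ uses precisely how $s$ and $t$ sit among the four bigons. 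Your closing worry---that one must rule out an $X$-marking appearing in the surviving annulus, which would give $U^t$ instead of $U^{2-t}$---is exactly the right concern, and the careful choice of $s$ and $t$ is what settles it; but the check has to be carried out against the four-bigon picture, not the two-bigon one you describe.
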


Before proving this proposition we introduce some notations:
Represent the knots $K_+$ and $K_-$ by the grid diagrams $\GG_+$ and $\GG_-$ differing by a cross-commutation of columns. Again, we draw these two diagrams onto the same torus so that the $X$- and the $O$-markings are fixed. Using the same notations as in Section \ref{commutation_invariance}, let $\alpha=\{\alpha_1,...,\alpha_n\}$ be the horizontal circles of the diagrams, $\beta=\{\beta_1,...,\beta_n\}$ the vertical circles of $\GG_+$, and $\gamma=\{\beta_1,...,\beta_{i-1}, \gamma_i, \beta_{i+1},...\beta_n\}$ the vertical circles of $\GG_-$. Draw $\beta_i$ and $\gamma_i$ so that they meet perpendicularly in four points, and these intersection points do not lie on any of the horizontal circles. This way the complement of $\beta_i \cup \gamma_i$ has five components, four of which are bigons marked by a single $X$ or a single $O$. Label the $O$-markings so that $O_1$ is above $O_2$. Now the bigon marked by $O_2$ has a common vertex with one of the $X$-marked bigons; denote this point by $s$. The two $X$-labelled bigons share a vertex on $\beta_i\cap \gamma_i$, since $\GG_+$ can be obtained from $\GG_-$ by a cross-commutation. Call this common point $t$ (Figure \ref{fig:crossingchange_sametorus}). 

\begin{figure}[h]
\centering
\includegraphics[width=0.7\textwidth]{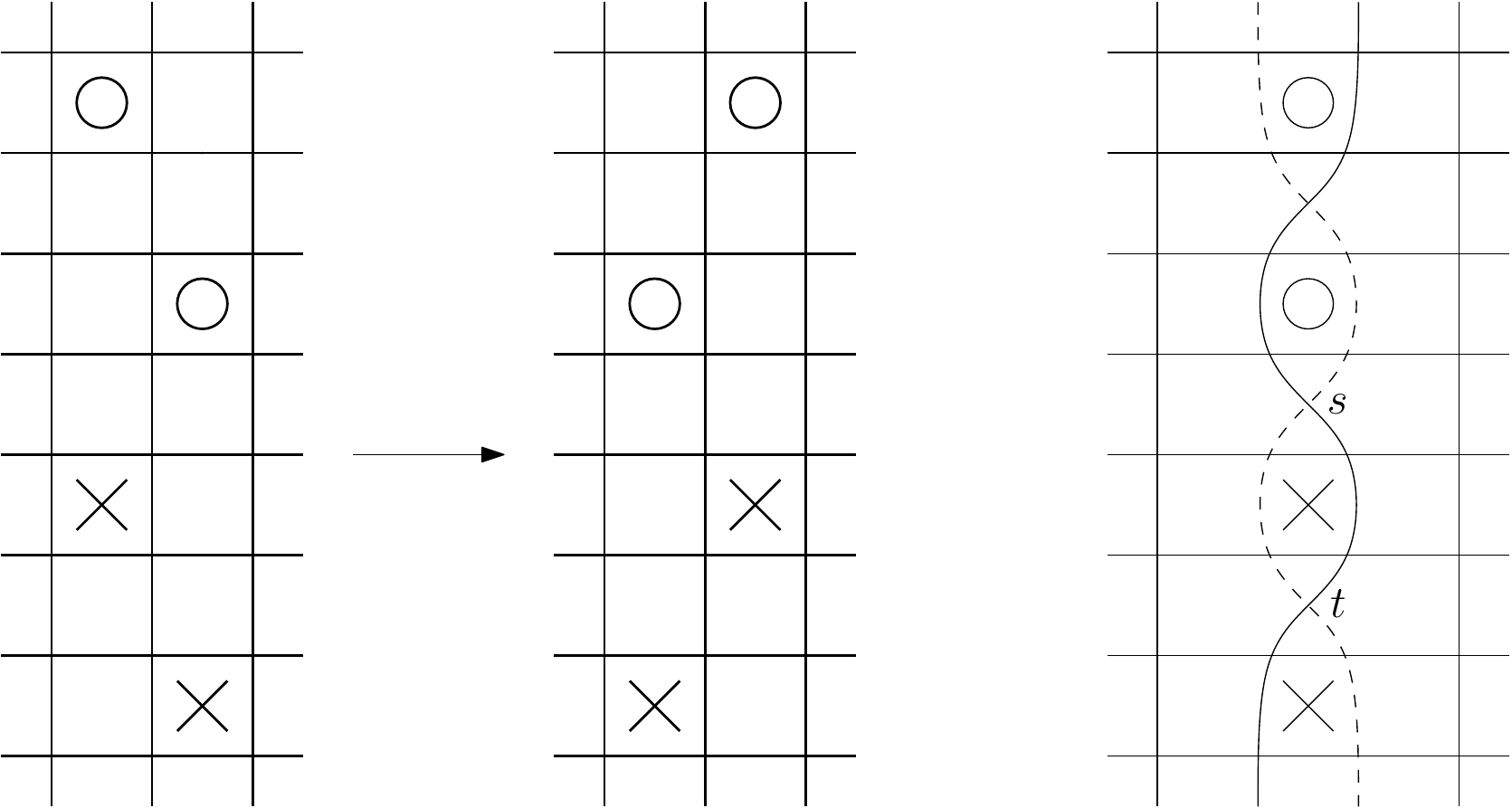}
\caption{Drawing $\GG_+$ and $\GG_-$ on the same torus, and the notation of the vertices}
\label{fig:crossingchange_sametorus}
\end{figure}

\begin{proof}[Proof of Proposition \ref{crossingchangemaps}]
Fix grid states $\gs x_+\in \gs S(\GG_+)$ and $\gs x_-\in \gs S(\GG_-)$. We use the notation $\pent_s(\gs x_+, \gs x_-)$ for the set of empty pentagons from $\gs x_+$ to $\gs x_-$ containing $s$ as a vertex, and similarly, $\pent_t(\gs x_-, \gs x_+)$ for the set of empty pentagons from $\gs x_-$ to $\gs x_+$ with one vertex at $t$. 

Consider the $\cR_t$-module maps $c_-:tGC^-(\GG_+)\rightarrow tGC^-(\GG_-)$ and \mbox{$c_+:tGC^-(\GG_-)\rightarrow tGC^-(\GG_+)$} defined on a grid state $\gs x_+\in \gs S(\GG_+)$ and $\gs x_-\in \gs S(\GG_-)$ respectively in the following way:
$$
c_-(\gs x_+)=\sum\limits_{\gs y_-\in \gs S(\GG_-)}\  \sum\limits_{p\in \pent_s(\gs x_+, \gs y_-)} U^{t\cdot |\XX\cap p|+(2-t)\cdot |\OO\cap p|} \cdot \gs y_-
$$
$$
c_+(\gs x_-)=\sum\limits_{\gs y_+\in \gs S(\GG_+)}\  \sum\limits_{p\in \pent_t(\gs x_-, \gs y_+)} U^{t\cdot |\XX\cap p|+(2-t)\cdot |\OO\cap p|} \cdot \gs y_+.
$$

\begin{lemma}\label{c-c+grading}
The map $c_-$ preserves the $t$-grading, while $c_+$ drops the $t$-grading by $2-t$.
\end{lemma}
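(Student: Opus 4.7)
The strategy is to mimic the grading computations of Proposition \ref{delta_grading} and of the grading-preservation step inside the proof of Theorem \ref{thm:commutationinvariance}. For each of the two maps I would fix a non-zero summand $U^{t|\XX\cap p|+(2-t)|\OO\cap p|}\cdot \gs y$ contributing to $c_\pm(\gs x)$, establish formulas for $M(\gs y)-M(\gs x)$ and $A(\gs y)-A(\gs x)$ in terms of $|p\cap \OO|$ and $|p\cap \XX|$, and substitute into the definition $\gr(U^\alpha\cdot \gs y)=M(\gs y)-t\cdot A(\gs y)-\alpha$.

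For $c_-$, I claim that an empty pentagon $p\in \pent_s(\gs x_+,\gs y_-)$ satisfies exactly the same combinatorial identities as an empty commutation pentagon from Section \ref{commutation_invariance}:
$$M(\gs x_+)-M(\gs y_-) = -2\cdot|p\cap \OO|, \qquad A(\gs x_+)-A(\gs y_-) = |p\cap \XX|-|p\cap \OO|.$$
The reason is that the local picture at the fifth vertex $s$ is identical to the commutation one: five sides, all five corners right-angled, with the fifth vertex at an intersection of $\beta_i$ and $\gamma_i$. Plugging these into the formula for $\gr$ and cancelling terms exactly as in Section \ref{commutation_invariance} gives $\gr(U^{t|\XX\cap p|+(2-t)|\OO\cap p|}\cdot \gs y_-)=\gr(\gs x_+)$, so $c_-$ preserves the $t$-grading.

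For $c_+$, the fifth vertex $t$ of an empty pentagon $p\in \pent_t(\gs x_-,\gs y_+)$ sits on the shared boundary of the two $X$-marked bigons rather than between an $X$-bigon and the $O_2$-bigon. This local difference produces two extra additive constants in the combinatorial identities, which I expect to read
$$M(\gs x_-)-M(\gs y_+) = 2-2\cdot|p\cap \OO|, \qquad A(\gs x_-)-A(\gs y_+) = 1+|p\cap \XX|-|p\cap \OO|.$$
Substituting into the definition of $\gr$ and simplifying as before yields
$$\gr(U^{t|\XX\cap p|+(2-t)|\OO\cap p|}\cdot \gs y_+) = \gr(\gs x_-)-(2-t),$$
which says that $c_+$ drops the $t$-grading by $2-t$.

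The main obstacle will be producing the two additive constants $+2$ and $+1$ in the $c_+$-formulas cleanly. The slickest route is probably to convert a $t$-pentagon into an auxiliary rectangle by gluing in the adjacent $X$-bigon and applying \eqref{Maslovrectangle} and \eqref{Alexanderrectangle}, taking care to track how this enlargement changes $|p\cap \XX|$ and $|p\cap \OO|$; alternatively, one can verify the two identities directly from the combinatorial definitions of $M$ and $A$ at the vertex $t$, as in \cite[Lemma 5.1.3]{ozsvath2015grid}. Once the four combinatorial formulas are in hand, the grading substitution is the same routine bookkeeping as in Proposition \ref{delta_grading}.
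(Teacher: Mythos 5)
Your target formulas are correct, and I verified they are consistent with the paper's computations: for $c_-$, the identities $M(\gs x_+)-M(\gs y_-)=-2|p\cap\OO|$ and $A(\gs x_+)-A(\gs y_-)=|p\cap\XX|-|p\cap\OO|$ do hold (the paper's four cases all collapse to this), and the $c_+$ formulas with the extra $+2$ and $+1$ produce the drop of $2-t$ under the $\gr$-substitution exactly as you claim. The final bookkeeping step is also the same as in the paper. So the conclusion is right.

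However, there is a real gap in how you justify the pentagon identities, and for $c_-$ the stated reason is actually misleading. You say ``the local picture at the fifth vertex $s$ is identical to the commutation one,'' but it is not: in the commutation setup the two bigons cut out by $\beta_i$ and $\gamma_i$ are unmarked, whereas here $s$ sits between the $O_2$-bigon and an $X$-bigon, both of which carry a marking. This is precisely why the paper does not simply quote the commutation Lemma 5.1.3 but instead runs a four-case analysis. Its route is: associate to each $p\in\pent_s(\gs x_+,\gs y_-)$ the rectangle $r\in\rect(\gs x_+,\gs y_+)$ agreeing with $p$ outside the four bigons, apply \eqref{Maslovrectangle}--\eqref{Alexanderrectangle} to $r$, and then correct by the grading shift $\gr(\gs y_-)-\gr(\gs y_+)$ under the natural bijection $\gs S(\GG_+)\leftrightarrow\gs S(\GG_-)$. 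Both the marking difference $|\XX\cap r|-|\XX\cap p|$, $|\OO\cap r|-|\OO\cap p|$ and the bijection shift depend on which segment $\mathbf{A},\mathbf{B},\mathbf{C},\mathbf{D}$ of $\beta_i$ the moved component lies in, and it is a small miracle (which has to be checked case by case) that the two dependencies cancel to give your uniform formula. So your clean intermediate statement is a nice repackaging of the paper's result, but proving it will require essentially the same segment-by-segment bookkeeping; the ``gluing a bigon to make a rectangle'' route you sketch does not avoid it, because which marking the glued-in region contributes still depends on the segment. The $c_+$ case has the same issue; there you flag it honestly, but the $c_-$ case needs the same caveat.
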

\begin{proof}
The four marking between the circles $\beta_i$ and $\gamma_i$ divide $\beta_i$ into four segments, call them $\mathbf{A}$, $\mathbf{B}$, $\mathbf{C}$ and $\mathbf{D}$ according to the followings: let $\mathbf{A}$ be the segment between the two $O$-markings, and from this part on, the order of the other segments to south is $\mathbf{B}$, $\mathbf{C}$ and $\mathbf{D}$. There is a natural one-to-one correspondence between $\gs S(\GG_+)$ and $\gs S(\GG_-)$, assigning to $\gs x_+ \in \gs S(\GG_+)$ the grid state $\gs x_- \in \gs S(\GG_-)$ that agrees with $\gs x_+$ in $n-1$ elements. It is not hard to compute the difference between the grading of $\gs x_-$ and $\gs x_+$:
$$\gr(\gs x_-)=\gr(\gs x_+)+\left\{\begin{array}{rcl}
-1+t&\mathrm{if}&\gs x_+\cap \beta_i \in \mathbf{A}\\
1&\mathrm{if}&\gs x_+\cap \beta_i \in \mathbf{B}\\
1-t&\mathrm{if}&\gs x_+\cap \beta_i \in \mathbf{C}\\
1&\mathrm{if}&\gs x_+\cap \beta_i \in \mathbf{D}.
\end{array}\right.$$

We partition pentagons into left and right ones depending on whether they lie on the left or on the right side of the cross-commutation. Associate to each pentagon $p\in \pent(\gs x_+, \gs y_-)$ the rectangle $r\in \rect(\gs x_+, \gs y_+)$ with the same local multiplicities of the small cells as $p$, except from the four bigons between $\beta_i$ and $\gamma_i$. From Proposition \ref{delta_grading} we know that 
$$\gr (\gs x_+)=\gr (\gs y_+)-t\cdot |\XX\cap r|-(2-t)\cdot |\OO\cap r|+1.$$

Consider the left pentagons. We will do the computations by distinguishing four cases depending on the position of $\gs y_+\cap \beta_i$:
\begin{itemize}
\item If $\gs y_+ \cap \beta_i \in \mathbf{A}$, then
$$\gr(\gs x_+)=\gr(\gs y_+)-t\cdot |\XX\cap r|-(2-t)\cdot |\OO\cap r|+1=$$
$$=\gr(\gs y_-)+1-t-t\cdot |\XX\cap p|-(2-t)\cdot (|\OO\cap p|+1)+1=\gr (U^{t\cdot |\XX\cap p|+(2-t)\cdot |\OO\cap p|}\cdot \gs y_-).$$

\item If $\gs y_+ \cap \beta_i \in \mathbf{B}$, then
$$\gr(\gs x_+)=\gr(\gs y_+)-t\cdot |\XX\cap r|-(2-t)\cdot |\OO\cap r|+1=$$
$$=\gr(\gs y_-)-1-t\cdot |\XX\cap p|-(2-t)\cdot |\OO\cap p|+1=\gr (U^{t\cdot |\XX\cap p|+(2-t)\cdot |\OO\cap p|}\cdot \gs y_-).$$

\item If $\gs y_+ \cap \beta_i \in \mathbf{C}$, then
$$\gr(\gs x_+)=\gr(\gs y_+)-t\cdot |\XX\cap r|-(2-t)\cdot |\OO\cap r|+1=$$
$$=\gr(\gs y_-)-1+t-t\cdot (|\XX\cap p|+1)-(2-t)\cdot |\OO\cap p|+1=\gr (U^{t\cdot |\XX\cap p|+(2-t)\cdot |\OO\cap p|}\cdot \gs y_-).$$

\item If $\gs y_+ \cap \beta_i \in \mathbf{D}$, then
$$\gr(\gs x_+)=\gr(\gs y_+)-t\cdot |\XX\cap r|-(2-t)\cdot |\OO\cap r|+1=$$
$$=\gr(\gs y_-)-1-t\cdot |\XX\cap p|-(2-t)\cdot |\OO\cap p|+1=\gr (U^{t\cdot |\XX\cap p|+(2-t)\cdot |\OO\cap p|}\cdot \gs y_-).$$
\end{itemize}

For the right pentagons and for the map $c_+$ the computation goes the same way.
\end{proof}

\begin{lemma}\label{c_c+chain_maps}
The maps $c_-$ and $c_+$ are chain maps.
\end{lemma}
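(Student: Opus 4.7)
The plan is to follow the template used in the proof of Theorem \ref{deltadelta0} and Proposition \ref{Pchainmap}, adapted to the constraint that the pentagons involved must contain the distinguished vertex $s$ (respectively $t$). For $\gs x_+\in \gs S(\GG_+)$ we write
$$
(c_-\circ \partial_t^-+\partial_t^-\circ c_-)(\gs x_+)=\sum\limits_{\gs z_-\in \gs S(\GG_-)}\left(\sum\limits_{\psi\in \pi(\gs x_+,\gs z_-)}N(\psi)\cdot U^{t\cdot |\XX\cap \psi|+(2-t)\cdot |\OO\cap \psi|}\right)\cdot \gs z_-,
$$
where $N(\psi)$ counts the decompositions of $\psi$ either as $r*p$ with $r\in \rect(\gs x_+,\gs y_+)$ and $p\in \pent_s(\gs y_+,\gs z_-)$, or as $p*r$ with $p\in \pent_s(\gs x_+,\gs y_-)$ and $r\in \rect(\gs y_-,\gs z_-)$. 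Since we work over $\mathbb{F}_2$, it suffices to exhibit an involution on the decompositions of each $\psi$ with $N(\psi)>0$ that has no fixed points, i.e. to show $N(\psi)$ is always even (or that the odd cases cancel in pairs among different $\psi$).

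The key step is the case analysis on the symmetric difference $\gs x_+\triangle \gs z_-$, entirely analogous to the three cases appearing in the proof of Proposition \ref{Pchainmap}:
\begin{itemize}
\item If $|\gs x_+\triangle \gs z_-|=4$, the rectangle and the $s$-pentagon in $\psi$ share no corner, so $\psi$ admits the two decompositions $r*p$ and $p*r$ and $N(\psi)=2$.
\item If $|\gs x_+\triangle \gs z_-|=3$, the region $\psi$ has a distinguished corner of interior angle $270^\circ$ (distinct from $s$), and the two ways of cutting at this corner yield exactly the rectangle-pentagon decompositions, so again $N(\psi)=2$.
\item If $\gs x_+\triangle \gs z_-$ has one element, $\psi$ wraps around the torus as a thin annulus together with a strip carrying the vertex $s$; the same argument as in Proposition \ref{Pchainmap} produces two cuts of $\psi$ near $s$ (one along $\beta_i$ and one along $\gamma_i$), giving $N(\psi)=2$.
\end{itemize}
A separate check is needed for truly degenerate configurations where $\psi$ could a priori have $N(\psi)=1$: one verifies that the requirement for the pentagon to have $s$ as a vertex forces such a $\psi$ to carry at least one of the markings $\OO$ or $\XX$ in the same way as the corresponding annuli in Lemma \ref{H_chain_homotopy}, so these either do not occur or cancel in pairs.

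Thus all contributions vanish modulo $2$, proving $c_-\circ \partial_t^-=\partial_t^-\circ c_-$. The argument for $c_+$ is identical after replacing $s$ by $t$ and reversing the roles of $\GG_+$ and $\GG_-$. The main obstacle is the bookkeeping in the degenerate third case: the presence of the forced vertex $s$ (resp.\ $t$) breaks the symmetry between $\beta_i$ and $\gamma_i$ that was used in the commutation proof, so one has to check by hand that the only annular domains having $N(\psi)>0$ still come in cancelling pairs, rather than producing a spurious identity-like term as in Lemma \ref{H_chain_homotopy}.
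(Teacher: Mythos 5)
Your overall strategy matches the paper's: rewrite $c_-\circ\partial_t^-+\partial_t^-\circ c_-$ as a count $N(\psi)$ of decompositions of domains $\psi$ and argue cancellation mod $2$, in parallel with the proofs of Theorem~\ref{deltadelta0} and Proposition~\ref{Pchainmap}. The first two bullets of your case analysis are fine. The problem is the third case, and you in fact say two contradictory things about it: bullet~3 asserts that near $s$ one can always cut along $\beta_i$ or $\gamma_i$ so that $N(\psi)=2$, while your ``separate check'' paragraph and your closing remark concede that there may be annular $\psi$ with $N(\psi)=1$. Bullet~3 as stated is wrong: the requirement that the pentagon have its distinguished fifth vertex exactly at $s$ kills one of the two cuts that were available in the commutation proof, so for the relevant annular domains the decomposition is in fact unique.

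The gap, then, is that your hedged alternative ``either do not occur or cancel in pairs'' is left unresolved, and it is precisely this step that is the content of the lemma. The paper's proof pins it down: these $N(\psi)=1$ domains do occur. They connect grid states $\gs x_+$ and $\gs x_-$ agreeing in all but one component, and there are exactly two such thin annular regions $A_1$ and $A_2$, each with one corner at $s$ and the other two corners at the components where $\gs x_+$ and $\gs x_-$ disagree. Each of $A_1$, $A_2$ decomposes uniquely as (rectangle in $\GG_+$)\,$*$\,(pentagon through $s$) or as (pentagon through $s$)\,$*$\,(rectangle in $\GG_-$), and the decisive observation is that $A_1$ and $A_2$ contain exactly the same $\XX$- and $\OO$-markings, so their two contributions carry the same power of $U$ and cancel over $\mathbb{F}_2$. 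This is cancellation \emph{between two distinct domains}, not an involution within the decompositions of a single $\psi$; your framing in terms of a fixed-point-free involution on decompositions of each $\psi$ cannot capture it. To repair the sketch you should drop bullet~3's $N(\psi)=2$ claim, identify $A_1$ and $A_2$ explicitly, and verify the equality of their marking counts. The same argument with $s$ replaced by $t$ then gives the claim for $c_+$.
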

\begin{proof}
The proof is similar to the reasoning of Lemma \ref{Pchainmap}. If we consider the expression $\partial^-_t\circ c_-(\gs x_+)+c_-\circ \partial^-_t(\gs x_+)$, then most of the domains contribute in pairs.

However, there might be two exceptional ones that admit a unique decomposition. These exceptional domains connect grid states $\gs x_+\in \GG_+$ and $\gs x_-\in \GG_-$ that agree in all but one component. There are two thin annular regions $A_1$ and $A_2$ that have exactly three corners: one vertex is at $s$, and the other two are the components which distinguish $\gs x_+$ and $\gs x_-$ (see Figure 6.4 and 6.5 in \cite{ozsvath2015grid}). Both $A_1$ and $A_2$ have a unique decomposition as either a juxtaposition of an empty pentagon with a vertex at $s$ followed by an empty rectangle in $\GG_-$ or as a juxtaposition of an empty rectangle in $\GG_+$ followed by an empty pentagon with a vertex at $s$. Since $A_1$ and $A_2$ contains exactly the same $X$- and $O$-markings, their contributions to $\partial^-_t\circ c_-(\gs x_+)+c_-\circ \partial^-_t(\gs x_+)$ cancel. Therefore $\partial^-_t\circ c_-(\gs x_+)+c_-\circ \partial^-_t(\gs x_+)=0$.

The same argument shows that $c_+$ is also a chain map.
\end{proof}

The above chain maps $c_-$ and $c_+$ induce the desired maps $C_-$ and $C_+$ on the homologies. In order to verify Proposition \ref{crossingchangemaps}, we have to show that $C_-\circ C_+$ and $C_+\circ C_-$ are both the multiplication by $U^{2-t}$. For this aim, we construct chain homotopies between the composites $c_-\circ c_+$, respectively $c_+\circ c_-$, and multiplication by $U^{2-t}$. 

For $\gs x_-, \gs y_-\in \gs S(\GG_-)$, let $\hex_{s,t}(\gs x_-, \gs y_-)$ denote the set of empty hexagons with two consecutive corners at $s$ and at $t$ in the order consistent with the orientation of the hexagon. The set $\hex_{t,s}$ for $\gs x_+, \gs y_+\in \gs S(\GG_+)$ is defined analogously.

Let $H_-:tGC^-(\GG_-)\rightarrow tGC^-(\GG_-)$ be the $\cR_t$-module map whose value on any $\gs x_-\in \gs S(\GG_-)$ is 
$$H_-(\gs x_-)= \sum\limits_{\gs y_-\in \gs S(\GG_-)}\ \sum\limits_{h\in \hex_{s,t}(\gs x_-, \gs y_-)} U^{t\cdot |\XX\cap h|+(2-t)\cdot |\OO\cap h|}\cdot \gs y_-.$$
The analogous map $H_+:tGC^-(\GG_+)\rightarrow tGC^-(\GG_+)$ is defined in the same way using $\hex_{t,s}(\gs x_+, \gs y_+)$.

It can be shown that $H_-$ and $H_+$ drop the $t$-grading by $1-t$: if a hexagon $h$ from $\gs x_+$ to $\gs y_+$ is counted in $H_+$, then there exists a corresponding empty rectangle $r$ from $\gs x_+$ to $\gs y_+$ that contains one more $X$-marking, and the same number of $O$-markings as $h$. 

Following the lines of the proof of Lemma \ref{H_chain_homotopy}, we can easily verify that $H_+$ is a chain homotopy between $c_+\circ c_-$ and the multiplication by $U^{2-t}$, and that $H_-$ is a chain homotopy between $c_-\circ c_+$ and $U^{2-t}$, i.e.:
$$\partial^-_t\circ H_+ + H_+\circ \partial^-_t=c_+\circ c_-+U^{2-t},$$
$$\partial^-_t\circ H_- + H_-\circ \partial^-_t=c_-\circ c_++U^{2-t}.$$
%\newpage
Therefore, we have that $C_-\circ C_+$ and $C_+\circ C_-$ are both the multiplication by $U^{2-t}$, which completes the proof of Proposition \ref{crossingchangemaps}.
\end{proof}

As a corollary of this proposition, we can determine the rank of $t$-modified grid homology (recall that the rank of an $\cR_t$-module $M$ is the number of the free summands in $\faktor{M}{\Tors(M)}$).

%ToDo: megszorítás jele

\begin{lemma}\label{torsinjective}
Let $M$ and $N$ be two modules over $\cR_t$. If $\varphi: M\rightarrow N$ and $\psi: N\rightarrow M$ are two module maps with the property that $\psi \circ \varphi$ is the multiplication by $U^{\alpha}$ for some $\alpha\in \RR_{\geq 0}$, then $\varphi$ induces an injective map from $\faktor{M}{\Tors(M)}$ into $\faktor{N}{Tors(N)}$.
\end{lemma}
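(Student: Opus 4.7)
The plan is to split the argument into two parts. First I would verify that $\varphi$ descends to a well-defined map $\bar{\varphi}:M/\Tors(M)\to N/\Tors(N)$, which amounts to checking that $\varphi(\Tors(M))\subseteq\Tors(N)$; this is immediate from the module-map property, since if $pm=0$ with $0\neq p\in\cR_t$ then $p\cdot\varphi(m)=\varphi(pm)=0$, so $\varphi(m)\in\Tors(N)$. Second, I would show that $\bar{\varphi}$ is injective, which means verifying that whenever $\varphi(m)\in\Tors(N)$, one has $m\in\Tors(M)$.

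For the injectivity step the plan is to exploit $\psi$. Suppose $\varphi(m)\in\Tors(N)$, so that $q\varphi(m)=0$ for some nonzero $q\in\cR_t$. Applying the $\cR_t$-linear map $\psi$ gives
$$qU^\alpha\cdot m \;=\; \psi\bigl(q\varphi(m)\bigr) \;=\; 0.$$
Hence, provided $qU^\alpha$ is itself a nonzero element of $\cR_t$, the element $m$ is torsion and the lemma follows.

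The main obstacle is thus reduced to showing that $\cR_t$ has no zero divisors. I would prove this inside the larger ring $\cR'$: given two nonzero elements $a=\sum_{\alpha\in A}U^\alpha$ and $b=\sum_{\beta\in B}U^\beta$, well-orderedness of $A$ and $B$ allows one to pick minimal exponents $\alpha_0=\min A$ and $\beta_0=\min B$. Any pair $(\alpha,\beta)\in A\times B$ satisfying $\alpha+\beta=\alpha_0+\beta_0$ must obey $\alpha\geq\alpha_0$ and $\beta\geq\beta_0$, which forces $\alpha=\alpha_0$ and $\beta=\beta_0$. Consequently the coefficient of $U^{\alpha_0+\beta_0}$ in the Cauchy product $ab$ equals $1\neq 0$ in $\mathbb{F}_2$, so $ab\neq 0$. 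The subring $\cR_t\leq\cR'$ inherits this property, and since both $q$ and $U^\alpha$ are nonzero in $\cR_t$, their product $qU^\alpha$ is nonzero as well. This completes the argument; everything else is formal manipulation of the definitions.
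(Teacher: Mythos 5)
Your proof is correct and follows essentially the same route as the paper: both arguments reduce injectivity of $\bar{\varphi}$ to showing that $\varphi$ carries non-torsion elements to non-torsion elements, then use $\psi\circ\varphi = U^\alpha\cdot(-)$ to deduce $qU^\alpha\cdot m = 0$. The one genuine addition in your write-up is that you explicitly observe this step requires $qU^\alpha\neq 0$, i.e.\ that $\cR_t$ has no zero divisors, and you supply a proof via the minimal-exponent argument in $\cR'$; the paper's proof uses this fact silently (indeed, it is already needed for $\Tors(M)$ to be a submodule at all). This is a worthwhile detail to make explicit, but it is a tightening of the same argument rather than a different approach.
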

\begin{proof}
Let $\gs x\in \Tors(M)$ be a torsion element, i.e. there exist an $r\in \mathcal{R}$ such that $r \cdot \gs x =0$. Then $\varphi(\gs x)$ is also a torsion element, since $r \cdot \varphi(\gs x)=\varphi(r \cdot \gs x)=0$. Thus $\varphi$ maps torsion elements to torsion elements and the same holds for $\psi$. So $\varphi$ and $\psi$ indeed induce well-defined homomorphisms between $\faktor{M}{\Tors(M)}$ and $\faktor{N}{Tors(N)}$. 

For the injectivity it is enough to show that $\varphi$ and $\psi$ maps non-torsion elements to non-torsion elements. Suppose that for $\gs x\in M$ we have that $\varphi(\gs x)\in \Tors(N)$, that is, there exists an $s\in \mathcal{R}$ such that $s \cdot \varphi(\gs x)=0$. Then $\psi(s \cdot\varphi( \gs x))=s \cdot \psi(\varphi(\gs x))=s \cdot U^t\cdot \gs x=0$, so $\gs x\in \Tors(M)$, which means that $\varphi$ maps non-torsion elements to non-torsion elements.
\end{proof}

\begin{lemma}
For any grid diagram $\GG$ with grid number $n\geq 2$, $$\faktor{tGH^-(\GG)}{Tors(tGH^-(\GG))}\cong \cR_t^{2^{n-1}}.$$ 
\end{lemma}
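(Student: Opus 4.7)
My plan is to combine three ingredients: the crossing-change maps of Proposition \ref{crossingchangemaps}, the stabilization invariance of Section \ref{stabilizationinv}, and a direct computation for the $2\times 2$ grid of the unknot. The first decouples the rank from the knot type, the second propagates the rank through grid numbers, and the third pins down the normalization.

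I first claim that the rank of $tGH^-(\GG)$ depends only on the grid number $n$. Given any grid diagram $\GG$ of grid number $n$ representing a knot $K$, a knot-theoretic sequence of crossing changes turning $K$ into the unknot can be realised at fixed grid number $n$ as a sequence of commutations (used to bring the relevant pair of columns into adjacency, preserving the knot type) interleaved with cross-commutations. Theorem \ref{thm:commutationinvariance} handles commutations. For a cross-commutation I invoke Proposition \ref{crossingchangemaps}: the maps $C_\pm$ satisfy $C_\mp\circ C_\pm=U^{2-t}\cdot \Id$, and since $U^{2-t}\neq 0$ in $\cR_t$, Lemma \ref{torsinjective} yields that each $C_\pm$ induces an injection on the non-torsion quotient. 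Mutual injections then force the $\cR_t$-ranks of $tGH^-(K_+)$ and $tGH^-(K_-)$ to agree.

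The remaining task is to compute the common rank at each grid number, which I do using the unknot. For the standard $2\times 2$ unknot grid with $O_1,X_2$ on one diagonal and $X_1,O_2$ on the other, there are two grid states $\gs a,\gs b$; the only empty rectangles are the four unit cells of the torus, of which exactly two go $\gs a\to\gs b$ and two go $\gs b\to\gs a$. By the symmetry of the marking pattern, the two rectangles in each direction contain the same type of marking, so the two monomial contributions cancel modulo $2$; hence $\partial_t^-=0$ and $tGH^-(\GG_{U,2})\cong \cR_t\oplus\cR_t$ has rank $2=2^{2-1}$. The induction step on $n$ is supplied by Section \ref{stabilizationinv}: a single stabilization gives $tGH^-(\GG_{U,n+1})\cong tGH^-(\GG_{U,n})\shift{1-t}\oplus tGH^-(\GG_{U,n})$, and since passage to the non-torsion quotient is compatible with direct sum and with the grading shift, the rank doubles, yielding $\faktor{tGH^-(\GG_{U,n})}{\Tors}\cong \cR_t^{2^{n-1}}$.

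The main obstacle I foresee is making the rank bookkeeping rigorous in the first step: Lemma \ref{torsinjective} only gives injectivity modulo torsion, and for mutual injections to force rank equality one needs the non-torsion quotient to be a free $\cR_t$-module of well-defined finite rank. Since $\cR_t$ is not a PID for generic $t$, this is not automatic, but the induction above actually supplies the freeness (the unknot cases are free by construction, and crossing-change transport then produces injections into a free module), so the logic goes through provided the steps are ordered so as to avoid circularity.
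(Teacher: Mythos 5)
Your proposal follows essentially the same route as the paper: compute $tGH^-$ directly for a $2\times 2$ unknot grid to get $\cR_t^2$, use a stabilization to propagate to $\cR_t^{2^{n-1}}$ for the $n\times n$ unknot grid $\GG_n$, and then transport the answer to an arbitrary grid $\GG$ of the same grid number via the crossing-change maps $C_\pm$ of Proposition~\ref{crossingchangemaps} and the injectivity statement of Lemma~\ref{torsinjective}. That is exactly the paper's structure.

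The place where you diverge is in how the final isomorphism is extracted, and here your own flagged worry is the real issue. After producing mutual injections
$$\faktor{tGH^-(\GG)}{\Tors} \hookrightarrow \cR_t^{2^{n-1}} \quad\text{and}\quad \cR_t^{2^{n-1}} \hookrightarrow \faktor{tGH^-(\GG)}{\Tors},$$
you conclude that ``the ranks agree'' and then claim that freeness is ``supplied by the induction'' because the target is free. This does not follow: a torsion-free module that injects into a free module, and receives an injection from a free module, need not itself be free when the base ring is not a PID. And as you note, $\cR_t$ is generally not a PID (for irrational $t$ it is a polynomial ring in two variables; for some rational $t$ it is a non-normal numerical-semigroup ring such as $\mathbb{F}_2[s^2,s^3]$), so rank data alone does not force $\faktor{tGH^-(\GG)}{\Tors}\cong\cR_t^{2^{n-1}}$. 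The paper closes this gap differently: it invokes the structural assertion that \emph{every} submodule of $\cR_t^m$ is of the form $\cR_t^k$ with $k\le m$, which gives freeness and the rank bound in one step, and then uses the reverse injection to pin $k=2^{n-1}$. So to complete your argument along the paper's lines you need that structural fact about $\cR_t$-modules, not merely the rank bookkeeping; the unknot computation plus the mutual injections do not by themselves deliver the claimed isomorphism for a general $\GG$.
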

\begin{proof}
For any $n\geq 2$ we can draw an $n \times n$ grid diagram $\GG_n$ for the unknot, such that the $X$-markings are in the main diagonal of $\GG_n$, and the $O$-markings are the eastern neighbours of the $X$-markings. With a straightforward calculation it can be shown that $tGH^-(\GG_2)\cong \cR_t^2$. Note that the diagram $\GG_{n+1}$ can be obtained from $\GG_n$ by a stabilization. Therefore the rank of $tGH^-(\GG_{n+1})$ is twice the rank of $tGH^-(\GG_n)$. Hence $\faktor{tGH^-(\GG_n)}{\Tors(tGH^-(\GG_n))}\cong \cR_t^{2^{n-1}}$.

Since any grid diagram $\GG$ with grid number $n$ can be connected to $\GG_n$ by a sequence of crossing changes, from Proposition \ref{crossingchangemaps} and Lemma \ref{torsinjective} we get an injective module map from $\faktor{tGH^-(\GG)}{\Tors(tGH^-(\GG))}$ to $\cR_t^{2^{n-1}}$. As every submodule of $\cR_t^m$ is of the form $\cR_t^k$, where $k\leq m$, we have that $\faktor{tGH^-(\GG)}{\Tors(tGH^-(\GG)}\cong \cR_t^r$, where $r\leq 2^{n-1}$.\\
Proposition \ref{crossingchangemaps} and Lemma \ref{torsinjective} also give an inclusion of $\cR_t^{2^{n-1}}$ into $\faktor{tGH^-(\GG)}{\Tors(tGH^-(\GG))}$, from which $r=2^{n-1}$ follows.
\end{proof}

Another corollary of Proposition \ref{crossingchangemaps} is that we can give a bound for the change of $\Upsilon$ under crossing changes:

\begin{thm}\label{thm:crossingchangedifference}
If the knots $K_+$ and $K_-$ differ in a crossing change, then for $t\in [1,2]$:
$$\Upsilon_{K_+}(t)\leq \Upsilon_{K_-}(t)\leq \Upsilon_{K_+}(t)+(2-t),$$
and from the symmetry of $\Upsilon$, for $t\in [0,1]$:
$$\Upsilon_{K_+}(t)\leq \Upsilon_{K_-}(t)\leq \Upsilon_{K_+}(t)+t.$$
\end{thm}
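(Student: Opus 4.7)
The strategy is to deduce both inequalities directly from Proposition \ref{crossingchangemaps} together with Lemma \ref{torsinjective}. Fix $t\in[1,2]$. Proposition \ref{crossingchangemaps} gives $\cR_t$-module maps $C_-:tGH^-(K_+)\to tGH^-(K_-)$ and $C_+:tGH^-(K_-)\to tGH^-(K_+)$ whose two composites both equal multiplication by $U^{2-t}$; since $2-t\geq 0$, Lemma \ref{torsinjective} applies to both orderings $(\varphi,\psi)=(C_-,C_+)$ and $(\varphi,\psi)=(C_+,C_-)$. Consequently $C_-$ and $C_+$ each send homogeneous non-torsion classes to homogeneous non-torsion classes.

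For the lower bound, pick a homogeneous non-torsion $y\in tGH^-(K_+)$ realising $\gr(y)=\Upsilon_{K_+}(t)$. Since $C_-$ preserves the grading and preserves non-torsion-ness, $C_-(y)\in tGH^-(K_-)$ is homogeneous, non-torsion, and of grading $\gr(y)$, so $\Upsilon_{K_-}(t)\geq\gr(y)=\Upsilon_{K_+}(t)$. For the upper bound, pick a homogeneous non-torsion $x\in tGH^-(K_-)$ realising $\gr(x)=\Upsilon_{K_-}(t)$. Then $C_+(x)\in tGH^-(K_+)$ is homogeneous and non-torsion, and by the degree statement of Proposition \ref{crossingchangemaps} has grading $\gr(x)-(2-t)$. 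Therefore $\Upsilon_{K_+}(t)\geq\Upsilon_{K_-}(t)-(2-t)$, which rearranges to $\Upsilon_{K_-}(t)\leq\Upsilon_{K_+}(t)+(2-t)$, completing the case $t\in[1,2]$.

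For $t\in[0,1]$, the bound follows at once from the symmetry convention $\Upsilon_K(t):=\Upsilon_K(2-t)$ introduced just after Theorem \ref{upsiloninvariantundergridmove}. Applying the case proved above to the point $2-t\in[1,2]$ for the same pair $K_+,K_-$ yields
$$\Upsilon_{K_+}(2-t)\leq\Upsilon_{K_-}(2-t)\leq\Upsilon_{K_+}(2-t)+\bigl(2-(2-t)\bigr),$$
which, after unwinding the symmetry, is precisely $\Upsilon_{K_+}(t)\leq\Upsilon_{K_-}(t)\leq\Upsilon_{K_+}(t)+t$.

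There is essentially no hard step in this argument once the ground has been laid: all the work is hidden in Proposition \ref{crossingchangemaps} (construction of the pentagon chain maps $c_\pm$, their grading behaviour, and the verification via the hexagon homotopies $H_\pm$ that both composites equal multiplication by $U^{2-t}$) and in Lemma \ref{torsinjective}. The one place where one must be careful is to apply Lemma \ref{torsinjective} in \emph{both} orderings so that $C_-$ and $C_+$ each preserve non-torsion-ness, and to keep track of the fact that only $C_+$ shifts the grading (by $-(2-t)$), while $C_-$ is grading-preserving; these asymmetric grading shifts are exactly what produce the asymmetric bounds $0$ and $2-t$ (respectively $0$ and $t$).
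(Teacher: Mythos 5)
Your proof is correct and follows essentially the same route as the paper: pick homogeneous non-torsion classes realising the maxima, push them through $C_-$ and $C_+$, and invoke Lemma~\ref{torsinjective} (in both orderings) together with the grading behaviour from Proposition~\ref{crossingchangemaps} to get the two inequalities, then use the symmetry convention for $t\in[0,1]$. The only difference is that you spell out the $t\in[0,1]$ step and the double application of Lemma~\ref{torsinjective} explicitly, which the paper leaves implicit.
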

\begin{proof}
Consider a non-torsion element $\xi\in tGH^-(K_-)$ that has grading $\Upsilon_{K_-}(t)$.\\
As we have seen in the proof of Lemma \ref{torsinjective}, according to Proposition \ref{crossingchangemaps}, $C_+(\xi)$ is non-torsion, and its grading is $\Upsilon_{K_-}(t)-2+t$. Thus $\Upsilon_{K_-}(t)\leq \Upsilon_{K_+}(t)+2-t$.\\
Similarly, if $\eta\in tGH^-(K_+)$ is a non-torsion element with grading $\Upsilon_{K_+}(t)$, then its image $C_-(\eta)$ has grading $\Upsilon_{K_+}(t)$ as well. Since $C_-(\eta)$ is non-torsion, $\Upsilon_{K_+}(t) \leq \Upsilon_{K_-}(t)$, concluding the proof.
\end{proof}

Theorem \ref{thm:crossingchangedifference} implies a bound on the unknotting number $u(K)$ of the knot $K$.
\begin{cor}\label{cor:unknottingbound}
For $t\in [1,2]:$
$$|\Upsilon_K(t)|\leq u(K)\cdot (2-t),$$
while for $t \in [0,1]:$
$$|\Upsilon_K(t)|\leq u(K)\cdot t$$
\end{cor}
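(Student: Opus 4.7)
The plan is to iterate the crossing-change bound from Theorem \ref{thm:crossingchangedifference} along an unknotting sequence for $K$, compare the result to $\Upsilon$ of the unknot, and then use the symmetry built into the definition of $\Upsilon$ to handle $t\in[0,1]$.

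First, I would unpack Theorem \ref{thm:crossingchangedifference}: the two-sided inequality $\Upsilon_{K_+}(t)\leq \Upsilon_{K_-}(t)\leq \Upsilon_{K_+}(t)+(2-t)$ immediately gives, for $t\in[1,2]$, the symmetric bound
$$|\Upsilon_{K_+}(t)-\Upsilon_{K_-}(t)|\leq 2-t.$$
Thus a single crossing change alters $\Upsilon_K(t)$ by at most $2-t$.

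Next, by the definition of the unknotting number, there is a sequence of knots $K=K_0,K_1,\dots,K_{u(K)}=U$ with $U$ the unknot, where consecutive knots differ by exactly one crossing change. Applying the above bound to each pair $K_i,K_{i+1}$ and telescoping via the triangle inequality yields
$$|\Upsilon_K(t)-\Upsilon_U(t)|\leq u(K)\cdot (2-t)$$
for $t\in[1,2]$. To conclude the first inequality, one needs $\Upsilon_U(t)=0$. This is the one small subtlety and the step I would treat most carefully: take the standard $2\times 2$ grid diagram $\GG_2$ for the unknot used in the rank computation earlier, where $tGH^-(\GG_2)\cong \cR_t^{\,2}$; a direct check of the two generators (one Maslov/Alexander bigrading is $(0,0)$, the other $(1,1)$, so their $t$-gradings are $0$ and $1-t$) shows the maximum $t$-grading of a homogeneous non-torsion element is $0$ for $t\in[1,2]$, hence $\Upsilon_U(t)=0$. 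This gives $|\Upsilon_K(t)|\leq u(K)\cdot (2-t)$ on $[1,2]$.

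Finally, for $t\in[0,1]$ I would invoke the symmetry $\Upsilon_K(t)=\Upsilon_K(2-t)$, which is built into the extension of the definition of $\Upsilon$ in Section \ref{upsilonchapter}. Since $2-t\in[1,2]$, the already-established inequality gives
$$|\Upsilon_K(t)|=|\Upsilon_K(2-t)|\leq u(K)\cdot\bigl(2-(2-t)\bigr)=u(K)\cdot t,$$
which is the second inequality. The only real obstacle is checking $\Upsilon_U(t)=0$, and that is a short direct computation on $\GG_2$; everything else is a clean telescoping argument.
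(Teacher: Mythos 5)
Your proposal follows the paper's proof exactly: iterate the crossing-change bound of Theorem \ref{thm:crossingchangedifference} along an unknotting sequence, anchor at $\Upsilon_U(t)=0$, and use the symmetric extension of $\Upsilon$ to pass from $[1,2]$ to $[0,1]$. The only addition is that you sketch a grid computation of $\Upsilon_U(t)=0$ on $\GG_2$, which the paper simply asserts; that is a reasonable thing to fill in, and your bigradings are consistent with the $W$ used in the paper's stabilization discussion.
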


\begin{proof}
Recall that for the unknot $\Upsilon_U(t)=0$. From Theorem \ref{thm:crossingchangedifference} we know that in case $t\in [1,2]$ and $t\in [0,1]$, a crossing change alters the absolute value of $\Upsilon$ by $2-t$ and $t$, respectively. By definition, the unknotting number of $K$ is the minimum number of crossing changes needed to transform $K$ to the trivial knot, which easily leads to the desired inequality. For the bound in the case $\upsilon(K)=\Upsilon_K(1)$ see \cite[Remark 3.4.]{unoriented}.
\end{proof}

\bibliographystyle{amsplain}
%\nocite*
\bibliography{biblio}

\providecommand{\bysame}{\leavevmode\hbox to3em{\hrulefill}\thinspace}
\providecommand{\MR}{\relax\ifhmode\unskip\space\fi MR }
% \MRhref is called by the amsart/book/proc definition of \MR.
\providecommand{\MRhref}[2]{%
  \href{http://www.ams.org/mathscinet-getitem?mr=#1}{#2}
}
\providecommand{\href}[2]{#2}
\begin{thebibliography}{10}

\bibitem{MR1501429}
J.~W. Alexander, \emph{Topological invariants of knots and links}, Trans. Amer.
  Math. Soc. \textbf{30} (1928), no.~2, 275--306. \MR{1501429}

\bibitem{brunn1897}
H.~Brunn, \emph{Über verknotete kurven}, Verhandlungen des Internationalen
  Math. Kongresses (Zurich 1897) (1898), 256--259.

\bibitem{Cromwell199537}
P.~R. Cromwell, \emph{Embedding knots and links in an open book i: Basic
  properties}, Topology and its Applications \textbf{64} (1995), no.~1, 37 --
  58.

\bibitem{MR2232855}
I.~A. Dynnikov, \emph{Arc-presentations of links: monotonic simplification},
  Fund. Math. \textbf{190} (2006), 29--76. \MR{2232855}

\bibitem{MR555835}
H.~C. Lyon, \emph{Torus knots in the complements of links and surfaces},
  Michigan Math. J. \textbf{27} (1980), no.~1, 39--46. \MR{555835}

\bibitem{MR2480614}
C.~Manolescu, P.~Ozsv{\'a}th, and S.~Sarkar, \emph{A combinatorial description
  of knot {F}loer homology}, Ann. of Math. (2) \textbf{169} (2009), no.~2,
  633--660. \MR{2480614}

\bibitem{MR792695}
L.~P. Neuwirth, \emph{{$\ast$} projections of knots}, Algebraic and
  differential topology---global differential geometry, Teubner-Texte Math.,
  vol.~70, Teubner, Leipzig, 1984, pp.~198--205. \MR{792695}

\bibitem{MR2500576}
L.~Ng and D.~Thurston, \emph{Grid diagrams, braids, and contact geometry},
  Proceedings of {G}\"okova {G}eometry-{T}opology {C}onference 2008, G\"okova
  Geometry/Topology Conference (GGT), G\"okova, 2009, pp.~120--136.
  \MR{2500576}

\bibitem{MR2026543}
P.~Ozsv{\'a}th and Z.~Szab{\'o}, \emph{Knot {F}loer homology and the four-ball
  genus}, Geom. Topol. \textbf{7} (2003), 615--639. \MR{2026543}

\bibitem{ozsvath2015grid}
P.~S. Ozsv{\'a}th, A.~I. Stipsicz, and Z.~Szab{\'o}, \emph{Grid homology for
  knots and links:}, Mathematical Surveys and Monographs, American Mathematical
  Society, 2015.

\bibitem{ozsvath:2014}
P.~Ozsváth, A.~Stipsicz, and Z.~Szabó, \emph{Concordance homomorphisms from
  knot floer homology}.

\bibitem{unoriented}
P.~S. Ozsváth, Z.~Szabó, and A.~I. Stipsicz, \emph{{Unoriented Knot Floer
  Homology and the Unoriented Four-Ball Genus}}, International Mathematics
  Research Notices \textbf{2017} (2016), no.~17, 5137--5181.

\bibitem{MR2915478}
S.~Sarkar, \emph{Grid diagrams and the {O}zsv\'ath-{S}zab\'o tau-invariant},
  Math. Res. Lett. \textbf{18} (2011), no.~6, 1239--1257. \MR{2915478}

\end{thebibliography}
\end{document}